\newtheorem{thm}{Theorem}[section]
\newtheorem{lem}[thm]{Lemma}
\newtheorem{pro}[thm]{Proposition}
\newtheorem{ass}[thm]{Assumption}  
\newtheorem{defn}[thm]{Definition}
\newtheorem{rem}[thm]{Remark}
\numberwithin{equation}{section}
\renewcommand{\d}{\mathrm{d}}
\newcommand{\R}{\mathbb{R}}
\newcommand{\Y}{\mathbb{Y}}
\newcommand{\V}{\mathbb{V}}
\newcommand{\Vj}{\mathcal{V}}
\newcommand {\E} { {\mathbb E} }
\newcommand{\Sph}{\mathbb{S}}
\newcommand{\eps}{\epsilon}
\newcommand{\e}{{\rm e}}
\newcommand{\M}{\mathcal M}
\newcommand{\dps}{\displaystyle}
\newcommand{\system}[1]{\left\{ \begin{alignedat}{2}#1\end{alignedat}\right.}
\newcommand{\emat}{\end{pmatrix}}
\newcommand{\abs}[1]{\left | #1\right |}
\newcommand{\pare}[1]{ \left(#1\right) }
\newcommand{\norm}[1]{\left\Vert#1\right\Vert}
\DeclareMathOperator{\Id}{{\rm Id}}
\renewcommand{\div}{ {\rm div}}
\newcommand{\disc}[1]{#1^{\delta t}}
\newcommand{\nl}{ {\rm r} }
\newcommand{\bigo}{\mathcal{O}}
\newcommand{\Max}{\mathcal M}
\newcommand{\bt}{{\bar{t}}}
\newcommand{\bs}{{\bar{s}}}
\newcommand{\one}{{\bf{1}}}
\newcommand{\taueps}{{\tau_{\eps}}}
\newcommand{\Feps}{{F_{\eps}}}
\begin{document}

\title{Individual-based models for bacterial chemotaxis in the diffusion aymptotics}

\author{Mathias Rousset\thanks {SIMPAF, INRIA Lille - Nord Europe, Lille, France, (mathias.rousset@inria.fr).} 
{ \and} Giovanni Samaey\thanks {Department of Computer Science, K.U. Leuven,
Celestijnenlaan 200A, 3001 Leuven, Belgium, (giovanni.samaey@cs.kuleuven.be).}}



\pagestyle{myheadings} \markboth{Individual-based model of chemotaxis}{Rousset and Samaey}\maketitle

\begin{abstract}
We discuss velocity-jump models for chemotaxis of bacteria with an internal state that allows the velocity jump rate to depend on the memory of the chemoattractant concentration along their path of motion. Using probabilistic techniques, we provide a pathwise result that shows that the considered process converges to an advection-diffusion process in the (long-time) diffusion limit. We also (re-)prove using the same approach that the same limiting equation arises for a related, simpler process with direct sensing of the chemoattractant gradient.  Additionally, we propose a time discretization technique that retains these diffusion limits exactly, i.e., without error that depends on the time discretization.  In the companion paper \cite{variance},  these results are used to construct a coupling technique that allows numerical simulation of the process with internal state with asymptotic variance reduction, in the sense that the variance vanishes in the diffusion limit.     
\end{abstract}
{\bf MSC}: 35Q80, 92B05, 65C35. \\
{\bf Key words}: bacterial chemotaxis, velocity-jump process, diffusion approximation.

\section{Introduction}

Generally, the motion of flagellated bacteria consists of a sequence of run phases, during which a bacterium moves in a straight line at constant speed. The bacterium changes direction in a tumble phase, which takes much less time  than the run phase and acts as a reorientation. To bias movement towards regions with high concentration of chemoattractant, the bacterium adjusts its turning rate to increase, resp.~decrease, the probability of tumbling when moving in an unfavorable, resp.~favorable, direction \cite{Alt:1980p8992,Stock:1999p8984}. Since many species are unable to sense chemoattractant gradients reliably due to their small size, this adjustment is often done via an intracellular mechanism that allows the bacterium to retain information on the history of the chemoattractant concentrations along its path \cite{Bren:2000p7499}. The resulting model can be formulated as a velocity-jump process, combined with an ordinary differential equation (ODE) that describes the evolution of an internal state that incorporates this memory effect \cite{ErbOthm04,Erban:2005p4247}. This model will be called the ``fine-scale'' or ``internal dynamics'' model in this paper.

The probability density of the velocity-jump process evolves according to a kinetic equation, in which, besides position and velocity, the internal variables appear as additional dimensions.  A direct deterministic simulation of the density distribution of all the variables of the model is therefore prohibitively expensive. Hence, it is of interest to study the relation of this model with simplified, coarse-level descriptions of the bacteria dynamics. 

In chemotaxis, a first coarse description is obtained by neglecting velocity and internal variables and considering the bacterial position density on large space and time scales. One then postulates an advection-diffusion equation for this bacterial position density, in which a chemotactic sensitivity coefficient incorporates the effect of chemoattractant concentrations gradients on the density fluxes. This assumption leads to the classical Keller--Segel equations (see \cite{KelSeg70}, and \cite{Horst1,Horst2} for numerous historical references). 
Several works have also considered the motion of a bacterium to be governed by a velocity-jump process corresponding to a kinetic description for the phase-space density \cite{ALT:1980p7984,Othmer:1988p7986,Patlak:1953p7738}. In these models, the velocity jump rate of the bacteria depends on the local chemoattractant gradient. In the present paper, the model corresponding to such bacteria will be called a ``coarse'' or ``direct gradient sensing'' model. Unlike the limiting Keller--Segel equation, see e.g.~\cite{Brenner:1999p9030,Herrero:1998p9032,Othmer:1997p9042}, the kinetic description does not exhibit finite-time blow-up (which is believed to be unphysical) under certain biologically relevant assumptions on the turning kernel \cite{Chalub:2004p7641}. These models also converge to a Keller--Segel equation in the appropriate drift-diffusion limit, see e.g.~\cite{Chalub:2004p7641,Hillen:2000p4751,Othmer:2002p4752}. In \cite{Erban:2005p4247}, the kinetic equation associated to a model with internal state (similar to the model considered here) has also be shown to formally converge to a Keller--Segel equation using moment closure assumptions and an appropriate (diffusion) scaling; a similar result using an infinite moment expansion has been obtained in \cite{XueOth09}. Then, the parameters of internal dynamics appear in the expression for the chemotactic sensitivity \cite{ErbOthm04,Erban:2005p4247}. Existence results when the model is coupled to a mean-field production of chemo-attractants are also available \cite{BourCal08}, and analysis of the long-time behavior is carried out in \cite{2010arXiv1006.0982F}.  Moreover, some recent studies have assessed the biological relevance of such a model through experimental validation of travelling pulses \cite{10.1371/journal.pcbi.1000890}. 

The present paper contributes to the analysis of velocity-jump processes for chemotaxis of bacteria with internal dynamics, and their numerical discretization, in the diffusive asymptotics. The main contributions are twofold~:
\begin{itemize}
	\item From an analysis point of view, we consider (fine-scale) velocity-jump processes with internal dynamics \eqref{eq:process_noscale}, with jump rates satisfying \eqref{eq:lin_rate} and internal dynamics satisfying the two main assumptions \eqref{eq:tau_eps} and \eqref{eq:tau_scales} as well as other technical assumptions contained in Section~\ref{sec:ass_ass}. We rigorously prove, using probabilistic arguments, the \emph{pathwise} convergence of the velocity-jump process described above to the stochastic differential equation (SDE)  \eqref{eq:cprocess_hydro} in the long time limit (see Proposition~\ref{pro:convproc}). The technical steps of the proof are given in the beginning of Section~\ref{sec:adv-diff}.		
		 We also recall a convergence result for the (coarse) process with direct gradient sensing, see Proposition~\ref{pro:convcoupl}. For an appropriate choice of the model parameters, both limits can be made identical. 
	\item From a numerical point of view, we introduce a time-discretized model (see Section~\ref{sec:discr}) for both the fine-scale and the coarse velocity-jump processes.  When discretizing the fine-scale process with internal dynamics, particular attention is paid to the time discretization errors due to the approximate solution of the ODE describing the evolution of the internal state. As a result, the asymptotic analysis of the time-continuous case still holds in an exact fashion, ensuring that the advection-diffusion limits are preserved after time discretization.
\end{itemize}

Due to the possibly high number of dimensions of the kinetic model with internal state, the evolution of the bacterial density away from the diffusive limit may rather be simulated using a stochastic particle method.  However, a direct stochastic particle-based simulation suffers from a large statistical variance, even in the diffusive asymptotic regime, raising the important issue of variance reduced simulation. In the companion paper \cite{variance}, we will use the results of the analysis presented here to construct a numerical scheme that couples the fine-scale model for bacteria with internal dynamics and the simpler, coarse model for bacteria with direct gradient sensing to obtain a variance-reduced simulation of the fine-scale process. We show that the variance reduction is \emph{asymptotic}, in the sense that the statistical variance scales as the error between the fine-scale and coarse description.

This paper is organized as follows. 
In Section~\ref{sec:particle}, we discuss the velocity-jump processes that will be considered, introducing both the fine-scale process with internal state and a simpler, coarse process in which the internal state has been replaced by direct gradient sensing. We discuss the relevant asymptotic regimes in Section~\ref{sec:asympt}, and analyze the limits of the considered processes in these regimes in Section~\ref{sec:adv-diff}. Section~\ref{sec:discr} discusses a discretization scheme for the velocity-jump processes that retains the diffusion limit of the continuum processes. Section~\ref{sec:concl} contains conclusions and a more detailed discussion of the relation with the companion paper \cite{variance}.

\section{Particle-based models for bacterial chemotaxis \label{sec:particle}}

\subsection{Model with internal state\label{sec:model-internal}}

We consider bacteria that are sensitive to the concentration of $m$ chemoattractants $\pare{\rho_i(x)}_{i=1}^m$, with $\rho_i(x) \geq 0$ for $x \in \R^d$. While we do not consider time dependence of chemoattractants via production or consumption  by the bacteria, a generalization to this situation is straightforward.  Bacteria move with a constant speed $v$ (run), and change direction at random instances in time (tumble), in an attempt to move towards regions with high chemoattractant concentrations.  As in \cite{ErbOthm04}, we describe this behavior by a velocity-jump process driven by some internal state $y \in \mathbb{Y}\subset\R^{n}$ of each individual bacterium. The internal state models the memory of the bacterium and is subject to an evolution mechanism attracted by a function  $\psi: \R^m \to \R^n$ of the chemoattractants concentrations,
\[
S(x):= \psi(\rho_1(x),\ldots,\rho_m(x)) \in \R^n,
\]
where $x$ is the present position of the bacterium. A typical choice is $2m = n$, $\psi(\rho_1, \ldots,\rho_m) = (\rho_1, \ldots,\rho_m,0, \ldots, 0)$, and the internal mechanism being such that the $m$ first internal variables are memorizing the concentrations of chemoattractants, while the $m$ last internal variables are computing the variations of concentrations along a bacterium trajectory (see Section~\ref{ssec:example}). $S$ and its two first derivatives are assumed to be smooth and bounded.

We use ($l_s$,$t_{\lambda}$) as a reference for length and time, where 
\begin{itemize}
  \item $l_s$ is the typical length of the chemoattractant concentration variations (which we assume similar for all the different species of chemoattractants).
   \item $t_{\lambda}$ is the typical time between two changes of the bacterium velocity direction (tumbling).
\end{itemize}
We also introduce the time $t_x$ that indicates the typical time on which a bacterium observes changes in chemoattractant concentration; the typical speed of a bacterium is thus $l_s/t_x$.  Anticipating the time-scale analysis in Section~\ref{sec:asympt}, we already introduce a small parameter 
\begin{equation}
  \label{eq:eps}
  \eps := \frac{t_\lambda}{t_x} \ll 1,
\end{equation}
indicating that we assume the typical time between two velocity changes to be much smaller than the typical time on which we observe macroscopic movement of the bacteria.  

The model is now presented directly in dimensionless form.  The evolution of each bacterium individual position is denoted as
\[
  t \mapsto X_t,
\]
with velocity
\[
  \frac{\d X_t}{\d t} = \eps V_t, \qquad V_t\in \mathbb{V}=\Sph^{d-1},
\]                             
where $\Sph^{d-1}$ represents the unit sphere in $\R^d$. The evolution of the internal state is denoted by
$t \mapsto Y_t$. The internal state adapts to the local chemoattractant concentration via an ordinary differential equation (ODE),
\begin{equation}\label{eq:internal}
  \frac{\d Y_t}{\d t}=  \Feps(Y_t,S(X_t)),
\end{equation}
which is required to have a unique fixed point $y^*=S(x^*)$ for every fixed value $x^* \in \R^d$. We also introduce the deviations from equilibrium $z=S(x)-y$.  The evolution of these deviations is denoted as 
\[
t \mapsto Z_t = S(X_t) -Y_t.
\]
The velocity of each bacterium is switched at random jump times
 $(T_n)_{n \geq 1}$
that are generated via a Poisson process with a time dependent rate given by $\lambda(Z_t)$, where $z \mapsto \lambda(z)$ is a smooth function satisfying
\begin{equation}
  \label{eq:ratebound}
  0 < \lambda_{\rm min} \leq \lambda \pare{z} \leq \lambda_{\rm max} .
\end{equation}
We will denote its expansion for small $z$ as
\begin{equation}\label{eq:lin_rate}
\lambda(z) = \lambda_0 - b^T  z +  c_\lambda  \bigo \pare{\abs{z}^k } ,
\end{equation}
with $k \geq 2$,  $b \in \R ^ n $; the constant $c_\lambda$ is used to keep track of the nonlinearity of $\lambda$ in the analysis.

The new velocity $\Vj_{n}$ at time $T_n$ is generated at random according to a centered probability distribution
$\M (dv)$ 
with $\dps \int v \, \M (dv) =0 $, typically
\[
\M (dv) = \sigma_{\Sph^{d-1}}(dv),
\]
where $\sigma_{\Sph^{d-1}}$ is the uniform distribution on the unit sphere.

The resulting fine-scale stochastic evolution of a bacterium is then described by a left continuous with right limits (lcrl) process
 \[
 t \mapsto \pare{X_t, V_t, Y_t} ,
 \] 
which satisfies the following differential velocity-jump equation:
\begin{equation} \label{eq:process_noscale}
  \begin{cases}
    \dps \dfrac{\d X_t}{\d t} = \eps  V_t  \\[8pt]
    \dps \dfrac{\d Y_t}{\d t} =  \Feps(Y_t,S(X_t)) \\[8pt]
    \dps \int_{T_n}^{T_{n+1}} \lambda(Z_t) \d t = \theta_{n+1}, \qquad \text {with }
    \dps Z_t := S(X_t)-Y_t \\[8pt]
    \dps V_{t}  =  \Vj_{n} \quad \text{ for $t \in [T_{n},T_{n+1}]$ } , 
  \end{cases}
\end{equation}
with initial condition $X_0, V_0 \in \R^d$, $Y_0 \in \R^n$ and $T_0 = 0$. 
In \eqref{eq:process_noscale}, $\pare{\theta _n }_{n \geq 1}$ denote i.i.d. random variables with normalized exponential distribution, and $\pare{ \Vj_n }_{n \geq 1}$ denote i.i.d. random variables with distribution $ \mathcal M (dv)$.

\begin{rem}
In one spatial dimension, assuming that $ \mathcal M (dv) = \frac{1}{2}(\delta_{+1}+\delta_{-1})$, the system \eqref{eq:process_noscale} is equivalent to the following system:
\begin{equation} \label{eq:process1d}
\system{
& \frac{\d X_t}{\d t} = \eps V_t  &\\
& \frac{\d Y_t}{\d t} = \Feps(X_t, S(Y_t)) & \\
& \int_{T_n}^{T_{n+1}} \lambda(Z_t) \d t =2\theta_{n+1}, \\
& V_t = \Vj_{n} \quad \text{ for $t \in [T_{n},T_{n+1}]$ with $\Vj_{n}=-\Vj_{n-1}$},& 
}
\end{equation}
in the sense that both processes have the same probability path distribution. This can be checked on the Markov infinitesimal generator of probability transitions, see \eqref{e:kinetic} and discussion below. (Note that, in this process, the velocity is always reversed. This is compensated by a factor $2$ in the equation defining the jump times.) 
\end{rem}

\subsection{Example}\label{ssec:example}

For concreteness, we provide a specific example, adapted from \cite{ErbOthm04}. We consider $m=1$, i.e.~there is only one chemoattractant $\rho_1(x)$.  We first describe a cartoon dynamics exhibiting an excitation-adaptation behaviour.  The internal state is two-dimensional, i.e. $n=2$, and $y=(y_1,y_2)$ satisfies the following ODE:
\begin{equation}\label{e:exc-ad}
	\system{
	&\frac{\d y_1(t)}{\d t} = \frac{\rho_1(x)-y_1(t)}{t_a},\\
	& \frac{\d y_2(t)}{\d t} = \frac{-\rho_1(x)+y_1(t)-y_2(t)}{t_e},
	}
\end{equation}
in which $t_a$, resp.~$t_e$, represent an adaptation, resp.~excitation, time; and $y_1$, resp. $y_2$, represent the adapting, resp.~exciting, variable.
This model has a single fixed point $y^*=(y_1^*,y_2^*)=(\rho_1(x),0)=:S(x)$, and the deviation variables are given by $z=(z_1,z_2)=S(x)-y=(\rho_1(x)-y_1,-y_2)$. When the excitation time is much smaller than the adaptation time, $t_e \ll t_a$, the variable $y_1$ therefore adapts ``slowly'' to the environment and memorizes it, while $y_2$ computes ``faster'' the lag $y_1-\rho_1(x)$, giving the response to changing environments. In the asymptotics $t_e/t_a \to 0$, the internal dynamics reduce to a scalar equation
\begin{equation}\label{e:scalar-y}
	\frac{\d y_1(t)}{\d t} = \frac{\rho_1(x)-y_1}{t_a},
\end{equation}
while the exciting variable reduces to the difference $y_2 =  y_1 - \rho_1(x) =-z_1$ which is instantly learned by the bacteria.  In this limit, we may eliminate the second internal state variable, and write $y\equiv y_1$ and  $S(x):=\rho_1(x)$, and $z=S(x)-y$.

For the turning rate $z\mapsto \lambda(z)$, we choose a nonlinear strictly decreasing smooth function, depending on a scalar $\zeta$ that aims at measuring the difference between the chemoattractant concentration and the memory variable. When considering the two-dimensional internal dynamics, we choose $\zeta=z_2=-y_2$, while, in the scalar case $\zeta=z$. As a consequence, $\zeta > 0$, resp.~$\zeta < 0$, give the information to the bacterium that it is moving in a favorable, resp.~unfavorable, direction.  A specific choice of $\lambda(\zeta)$ is
\begin{equation}\label{eq:rate}
\lambda(\zeta) = 2\lambda_0 \pare{\frac{1}{2}-\frac{1}{\pi}\arctan \pare{\frac{\pi}{2\lambda_0}\beta \zeta}}, \qquad \beta >0.
\end{equation}
Note that this turning rate satisfies~
\[
\lambda(z) = \lambda_0 - \beta \zeta + \bigo \pare{\abs{\zeta}^3}.
\]

\subsection{Model with direct gradient sensing}\label{sec:modelgrad}

We now turn to a simplified model, in which the internal process (\ref{eq:internal}), and the corresponding state variables, are eliminated.  Instead, the turning rate depends directly on the chemoattractant gradient. The process with direct gradient sensing is a Markov process in position-velocity variables
\[
t \mapsto (X^c_t,V^c_t) ,
\]
which evolve according to the differential velocity-jump equations
\begin{equation} \label{eq:cprocess}
\system{
& \frac{\d X_t^c}{\d t} = \eps V_t^c  &\\
& \int_{T_n^c}^{T_{n+1}^c} \lambda^c_\eps(X_t^c , V_t^c) \d t = \theta_{n+1} & \\
& V_{t}^c  =  \Vj_n  \quad \text{ for $t \in [T_{n}^c,T^c_{n+1}]$ } , & 
}
\end{equation}
with initial condition $X_0, \Vj_0 \in \R^d$.  In (\ref{eq:cprocess}), $\pare{\theta _n }_{n \geq 1}$ denote i.i.d. random variables with normalized exponential distribution, and $\pare{ \Vj_n }_{n \geq 1}$ denote i.i.d. random variables with distribution $ \Max (dv)$. 
The turning rate of the process with direct gradient sensing is assumed to be of the form
\begin{equation}\label{eq:control_rate}
\lambda^c_\eps(x,v) := \lambda_0 - \eps\; A_\eps^T(x)  v  + \bigo\left(\eps^{2}\right),
\end{equation}
and to satisfy
\begin{equation}
  \label{eq:rateboundc}
  0 < \lambda_{\rm min} \leq \lambda \pare{x,v} \leq \lambda_{\rm max} .
\end{equation}
The vector field $A_\eps(x) \in \R^{d}$ is usually a linear combination of the columns of $\nabla S(x) \in \R^{d \times n}$, and, as a consequence on the smoothness assumption on $S(x)$, satisfies that, for $\eps\to0$,  both $A_\eps$ and its derivatives approach a limiting vector field $A_0$ in uniform norm.

The model \eqref{eq:control_rate} may describe a large bacterium that is able to directly sense chemoattractant gradients. In the case $n=1$, the turning rate (\ref{eq:control_rate}) is proportional to $ \nabla S(x)  v \in \R$, which can be interpreted as follows: the rate at which a bacterium will change its velocity direction depends on the alignment of the velocity with the gradient of the chemoattractant concentration $\nabla S(x)$, resulting in a transport towards areas with higher chemoattractant concentrations. 

\section{Asymptotic regimes and kinetic formulation\label{sec:asympt}}

\subsection{Notation}
Throughout the text, the Landau symbol $\bigo$ denotes a \emph{deterministic} and globally Lipschitz function (with $\bigo(0)=0$). Its precise value \emph{may vary} from line to line, and may depend on the parameters of the model.
However, its Lipschitz constant is uniform in $\eps$. In the same way, we will denote generically by $C > 0$ a deterministic constant that may depend on all the parameters of the model except $\eps$.

\subsection{Definition of time scales}\label{sec:asympt_time}

Recall that we have introduced two time scales, characterized by (i) the typical time $t_x$ on which a bacterium observes changes in chemoattractant concentration; and (ii) the typical time $t_{\lambda}$ between two changes of the bacterium velocity direction (tumbling).
We have assumed a large time-scale separation between these two time scales by introducing the small parameter $\eps = t_\lambda/t_x \ll 1$.

Let us now turn to the typical time $t_y$ associated with the dynamics of the internal state.  In principle, several typical time scales $(t_{1,y} \gg t_{2,y} \gg \ldots)$ may be required to describe the evolution of the internal state; $t_y:= t_{1,y}$ is assumed to be the largest one. 
The vector field driving the ODE~\eqref{eq:internal} of the internal state is thus of order
\begin{equation*}
  \Feps \sim \dfrac{t_\lambda}{t_y},
\end{equation*}
and we will assume that 
\begin{equation}
  \label{eq:F}
\Feps \sim \dfrac{t_\lambda}{t_y} = \eps^{1-\delta},
\end{equation}
with $\delta > 1/k$, where $k$ is defined by~\eqref{eq:lin_rate}. 

Consequently, we will consider three time scales~:
\begin{itemize}
  \item A fast time scale of order $\bigo(1)$ given by the rate of change of the velocity direction.
  \item A time scale of order $\bigo(1/\eps^{1-\delta})$ with $\delta > 1/k$ given by the internal state evolution; depending on $\delta$, this time scale can be very fast, fast or intermediate.
 \item A slow time scale $\bigo(1/\eps)$ given by the evolution of the chemoattractant concentration as seen from the bacteria.
\end{itemize}

\subsection{Asymptotic assumptions}\label{sec:ass_ass}

The loose assumption on the scale, equation~\eqref{eq:F} for the internal  dynamics~\eqref{eq:internal}, needs to be made precise by the following set of assumptions. First, we assume that the ODE~\eqref{eq:internal} driving the internal state is well approximated by a near-equilibrium evolution equation~:
\begin{ass}\label{a:1}
 We have
\begin{equation}\label{eq:tau_eps}
\Feps(y,s) =  - \taueps ^{-1} (y-s) + \eps^{1-\delta}c_F\bigo(\abs{s-y}^2),
\end{equation} 
where $\taueps  \in\R^{n\times n}$ is an invertible constant matrix. The constant $c_F$ is used to keep track of the dependence on the non-linearity of $F_\eps$ in the analysis.
\end{ass}
We now formulate the necessary assumptions on the scale of the matrix $\taueps$~:
\begin{ass}\label{a:2}
There is a constant $C >0$ such that for any $t \geq 0$, one has
\begin{equation}\label{eq:tau_scales}
  \norm{\e^{- t \taueps ^{-1} }} \leq C \e^{- t \eps^{1-\delta}/C}.
\end{equation}
\end{ass}
Assumption~\ref{a:2} is used to ensure exponential convergence of the linear ODE with time scale at least of order $\eps^{1-\delta}$. When $\taueps^{-1}$ is symmetric, Assumption~\ref{a:2} follows from the assumption that the eigenvalues of $\taueps^{-1}$ are strictly positive, with a lower bound of order $\bigo(\eps^{1-\delta})$.

\begin{ass}\label{a:3}
There is a constant $C >0$ such that
\[
 \sup_{t \geq 0} \norm{t\taueps ^{-1}\e^{- t \taueps ^{-1} } } \leq C  \eps^{-1}.
\]
\end{ass}
Assumption~\ref{a:3} is necessary for technical reasons in Lemma~\ref{lem:DTestim}. When $\taueps^{-1}$ is symmetric, then Assumption~\ref{a:3} follows from the assumption that the eigenvalues of $\taueps^{-1}$ are non-negative.

Finally, we assume that the solution of the ODE driving the internal state in \eqref{eq:process_noscale} satisfies the following long time behavior~:
\begin{ass}
   \label{ass:ODE}
Consider a process $t \mapsto S_t$ such that $ \sup_{t \in [0,+\infty] } \abs{  d S_t/dt }= \bigo( \eps )$, and assume $\abs{S_0 - Y_0} = \bigo(\eps^{\delta})$. Then the solution of
\[
\frac{ d Y_t}{ dt} = \Feps(Y_t,S_t),
\]
satisfies
\[
\sup_{t \in [0,+\infty] } \abs{ Y_t - S_t } = \bigo \pare{ \eps^{\delta}  }.
\]
\end{ass}
Assumption~\ref{ass:ODE} is motivated by the case of linear ODEs; indeed, in that case, Assumption~\ref{ass:ODE} is a consequence of Assumption~\ref{a:2}, as is seen from the following Lemma~: 
\begin{lem}
  Assume that
\[
\Feps(y,s) =  - \taueps ^{-1} (y-s) ,
\]
then Assumption~\ref{a:2} implies Assumption~\ref{ass:ODE}.
\end{lem}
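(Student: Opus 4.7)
The plan is to reduce everything to a linear inhomogeneous ODE for the deviation $W_t := Y_t - S_t$ and to estimate it via Duhamel's formula using the exponential decay estimate from Assumption~\ref{a:2}.

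First, I would differentiate $W_t = Y_t - S_t$ to obtain
\[
\frac{\d W_t}{\d t} = -\taueps^{-1} W_t - \frac{\d S_t}{\d t},
\]
using the explicit linear form of $\Feps$ assumed in the lemma. Variation of parameters then yields the closed form
\[
W_t = \e^{-t\taueps^{-1}} W_0 - \int_0^t \e^{-(t-r)\taueps^{-1}} \frac{\d S_r}{\d r}\,\d r.
\]
Taking norms and applying Assumption~\ref{a:2} to both the initial-data term and the convolution integrand gives
\[
|W_t| \leq C\,\e^{-t\eps^{1-\delta}/C} |W_0| + C \int_0^t \e^{-(t-r)\eps^{1-\delta}/C}\, \left|\frac{\d S_r}{\d r}\right| \d r.
\]

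Next I would plug in the two hypotheses: $|W_0| = |Y_0 - S_0| = \bigo(\eps^\delta)$ and the uniform bound $\sup_{r\geq 0}|\d S_r/\d r| = \bigo(\eps)$. The first term on the right is bounded by $C\bigo(\eps^\delta)$ uniformly in $t$. For the second term, factoring out the supremum leaves an explicit integral
\[
\int_0^t \e^{-(t-r)\eps^{1-\delta}/C}\, \d r \leq \frac{C}{\eps^{1-\delta}},
\]
so the convolution contributes $C\cdot \bigo(\eps) \cdot C\eps^{\delta-1} = \bigo(\eps^\delta)$. Summing the two bounds and taking the supremum in $t\in[0,+\infty]$ yields the desired estimate $\sup_t |Y_t - S_t| = \bigo(\eps^\delta)$.

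No step here looks like a genuine obstacle: the argument is a textbook stable-linear-ODE forced-input estimate, and the exponent $\delta$ emerges simply from the balance between the forcing scale $\eps$ and the decay rate $\eps^{1-\delta}$, which together produce the $\eps^\delta$ relaxation amplitude. The only point requiring care is to apply Assumption~\ref{a:2} directly to the matrix exponential (rather than to its norm squared or any other quantity), and to notice that the Lipschitz constants hidden in the $\bigo$ symbols are by convention uniform in $\eps$, so that the estimates propagate cleanly under the supremum.
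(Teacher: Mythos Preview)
Your proof is correct and follows essentially the same approach as the paper: write the deviation variable (the paper uses $Z_t = S_t - Y_t = -W_t$), apply Duhamel's formula, and bound the two resulting terms via Assumption~\ref{a:2} together with the hypotheses $|Z_0| = \bigo(\eps^\delta)$ and $\sup_t |\d S_t/\d t| = \bigo(\eps)$. The only cosmetic difference is the sign convention in the deviation variable.
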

\begin{proof} Denoting $Z_t=S_t - Y_t$ in Assumption~\ref{ass:ODE}, Duhamel's integration formula yields
\begin{equation*}
   Z_t  = \e^{- {t}{\taueps^{-1}}} Z_{0} + \int_{0}^{t} \e^{- \pare{t-t'}{\taueps^{-1}}} \frac{d S_{t'}}{d t'}  \d t',
\end{equation*}
so that by Assumption~\ref{a:2}, we have
\begin{align*}
  \abs{Z_t} &\leq \abs{Z_0} + C \eps  \int_{0}^t \e^{- \pare{t-t'}{\eps^{1-\delta}}/C }   \d t' . \\
&\leq \abs{Z_0} + C \eps \times  \eps^{\delta-1}\pare{1- \e^{- t {\eps^{1-\delta}}/C }    } ,
\end{align*}
and Assumption~\ref{ass:ODE} follows.
\end{proof}
In a similar fashion, Assumption~\ref{a:3} implies the following technical bound~:
\begin{lem} \label{lem:techbound} Consider a process $t \mapsto S_t$ such that $ \sup_{t \in [0,+\infty] } \abs{  d S_t/dt }= \bigo( \eps )$. Assume that $ \abs{S_0 - Y_0} = \bigo(1)$, and that Assumption~\ref{a:1}, Assumption~\ref{a:2}, Assumption~\ref{a:3} and Assumption~\ref{ass:ODE} hold. Then, we have
  \begin{equation}
    \label{eq:techbound}
    \sup_{t \in [0,+\infty] } \abs{ \taueps^{-1}\e^{- \taueps^{-1} t } (S_t - Y_t) } = \bigo \pare{ 1 } .
  \end{equation}
\end{lem}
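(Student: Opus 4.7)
The plan is to follow the blueprint of the preceding lemma and apply Duhamel's formula to the ODE for $Z_t := S_t - Y_t$. Using Assumption~\ref{a:1} this ODE reads $\d Z_t/\d t = \d S_t/\d t - \taueps^{-1} Z_t - \eps^{1-\delta} c_F \bigo(\abs{Z_t}^2)$, so
\[
Z_t = \e^{-t\taueps^{-1}} Z_0 + \int_0^t \e^{-(t-s)\taueps^{-1}} \pare{\frac{\d S_s}{\d s} - \eps^{1-\delta} c_F \bigo(\abs{Z_s}^2)} \d s.
\]
Multiplying on the left by $\taueps^{-1} \e^{-t\taueps^{-1}}$ and using that $\taueps^{-1}$ commutes with the semigroup, I obtain three contributions to estimate separately: the transported initial condition $\taueps^{-1} \e^{-2t\taueps^{-1}} Z_0$, the linear-forcing convolution $\int_0^t \taueps^{-1} \e^{-(2t-s)\taueps^{-1}} (\d S_s/\d s) \d s$, and a quadratic remainder.

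For the linear-forcing convolution, the change of variable $u = 2t - s$ rewrites the kernel as $\taueps^{-1} \e^{-u\taueps^{-1}}$ over $u \in [t,2t]$; Assumption~\ref{a:3} then provides $\norm{\taueps^{-1} \e^{-u\taueps^{-1}}} \leq C/(u\eps)$, which against $\abs{\d S_s/\d s} = \bigo(\eps)$ integrates to $C \int_t^{2t} \d u/u = C \log 2 = \bigo(1)$ uniformly in $t$. For the transported initial condition, rather than bounding $\norm{\taueps^{-1} \e^{-2t\taueps^{-1}}}$ and $\abs{Z_0}$ separately (the former is not uniformly $\bigo(1)$ as $t \to 0$), I use the ODE at $t = 0$ together with Assumption~\ref{a:1} to rewrite $\taueps^{-1} Z_0 = F_\eps(Y_0, S_0) - \eps^{1-\delta} c_F \bigo(\abs{Z_0}^2)$, which is $\bigo(1)$ by the boundedness of $F_\eps$ on the initial data, and then apply $\norm{\e^{-2t\taueps^{-1}}} \leq C$ from Assumption~\ref{a:2}.

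The main obstacle is the quadratic remainder $\eps^{1-\delta} c_F \int_0^t \taueps^{-1} \e^{-(2t-s)\taueps^{-1}} \bigo(\abs{Z_s}^2) \d s$, because applying the $C/(u\eps)$ bound of Assumption~\ref{a:3} together with a crude Gronwall estimate $\abs{Z_s}\leq C$ produces an unacceptable $\eps^{-\delta}$ factor. To close the estimate, I split the integration at the transient scale $s \sim \eps^{\delta-1}$: for $s$ beyond this scale, an extension of Assumption~\ref{ass:ODE} to the $\bigo(1)$ initial data (obtained by first invoking Assumption~\ref{a:2} so that the initial transient $\e^{-s\taueps^{-1}} Z_0$ has already decayed) gives $\abs{Z_s} = \bigo(\eps^{\delta})$, so that the quadratic source is $\bigo(\eps^{1+\delta})$ and its convolution against the $C/(u\eps)$ kernel is $\bigo(\eps)$; for $s$ below the transient scale, the short integration length combined with the $\eps^{1-\delta}$ prefactor compensates the kernel singularity. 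Assembling these three estimates delivers the uniform $\bigo(1)$ bound.
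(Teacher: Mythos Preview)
Your Duhamel set-up is essentially the paper's: the authors introduce $A_t:=\taueps^{-1}\e^{-\taueps^{-1}t}Z_t$, derive $\d A_t/\d t=-2\taueps^{-1}A_t+\taueps^{-1}\e^{-\taueps^{-1}t}(\d S_t/\d t-F_\eps+\taueps^{-1}Z_t)$, and integrate, which yields exactly the three contributions you list. Your $\log 2$ estimate for the linear-forcing convolution is correct; the paper instead factors the kernel as $\taueps^{-1}\e^{-\taueps^{-1}t}\cdot\e^{-(t-t')\taueps^{-1}}$, uses Assumption~\ref{a:2} to integrate the second factor (giving $C\eps^{\delta-1}(1-\e^{-t\eps^{1-\delta}/C})$), and then combines with Assumption~\ref{a:3} via the boundedness of $x\mapsto(1-\e^{-x})/x$. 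Either route gives the same $\bigo(1)$.

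Where you diverge from the paper is the quadratic remainder. The paper does not split into transient and post-transient regimes: it simply invokes Assumption~\ref{ass:ODE}, which is listed as a hypothesis of the lemma, to get $\abs{Z_s}=\bigo(\eps^\delta)$ \emph{for all} $s\geq0$, whence $\abs{F_\eps(Y_s,S_s)-\taueps^{-1}Z_s}=\eps^{1-\delta}c_F\bigo(\abs{Z_s}^2)=\bigo(\eps^{1+\delta})\leq\bigo(\eps)$. The quadratic source then has the same size as $\d S_s/\d s$ and is absorbed into the linear-forcing estimate already done. Your transient splitting is an unnecessary complication once Assumption~\ref{ass:ODE} is taken at face value as a hypothesis (granted, the paper is a bit loose here since the lemma only states $\abs{Z_0}=\bigo(1)$, while the conclusion of Assumption~\ref{ass:ODE} requires $\abs{Z_0}=\bigo(\eps^\delta)$; the standing assumptions just below the lemma resolve this).

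Your initial-condition argument, however, is circular. Assumption~\ref{a:1} says $F_\eps(Y_0,S_0)=\taueps^{-1}Z_0+\eps^{1-\delta}c_F\bigo(\abs{Z_0}^2)$, so rewriting $\taueps^{-1}Z_0=F_\eps(Y_0,S_0)-\eps^{1-\delta}c_F\bigo(\abs{Z_0}^2)$ gains nothing: no independent bound on $F_\eps$ is assumed, and its leading term is precisely $\taueps^{-1}Z_0$. The paper does not attempt to bound this term from the other hypotheses; it simply writes $\abs{A_t}\leq\abs{A_0}+\bigo(1)$ and relies on the separate standing assumption $\abs{\taueps^{-1}Z_0}=\bigo(1)$ recorded immediately after the lemma.
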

\begin{proof}
Let us denote $A_t :=   \taueps^{-1}\e^{- \taueps^{-1} t } (S_t - Y_t) $, which satisfies
\[
\frac{d A_{t}}{d t} = - 2 \taueps^{-1} A_t + \taueps^{-1}\e^{- \taueps^{-1} t } \pare{\frac{d S_{t}}{d t}  -F_{\eps}(Y_{t},S_{t} ) + 
\taueps^{-1}(S_{t}-Y_{t}) } .
\] 
Here again, Duhamel integration yields
\[
A_t = \e^{- {t}{2\taueps^{-1}}} A_{0} + \int_{0}^{t} \e^{- \pare{t-t'}{2\taueps^{-1}}}  \taueps^{-1}\e^{- \taueps^{-1} t' }\pare{  \frac{d S_{t'}}{d t'}  -F_{\eps}(Y_{t'},S_{t'} ) + 
\taueps^{-1}(S_{t'}-Y_{t'}) } \d t'.
\]
From Assumption~\ref{a:1} and Assumption~\ref{ass:ODE}, $ \abs{ F_{\eps}(Y_{t'},S_{t'} ) -
\taueps^{-1}(S_{t'}-Y_{t'}) } = \bigo(\eps^{1+\delta}) \leq \bigo(\eps)$ so that
\begin{align*}
  \abs{A_t} &\leq{A_{0}} +\norm{\taueps^{-1} \e^{- \taueps^{-1} t} } \int_{0}^{t} \norm{\e^{- \pare{t-t'}{\taueps^{-1}}} } \bigo(\eps) \d t' \\
& \leq{A_{0}} + \norm{t \taueps^{-1} \e^{- \taueps^{-1} t} } \frac{1}{t \eps^{1-\delta}}( 1 - \e^{ - t \eps^{1-\delta}/C  }  ) \bigo(\eps)\\
& \leq \bigo (1) + \bigo(\eps^{-1}) \bigo(\eps) = \bigo(1),
\end{align*}
where we have used in the above Assumption~\ref{a:2} and Assumption~\ref{a:3}.
\end{proof}
In the remainder of the paper, we will assume that the solution of~\eqref{eq:process_noscale} satisfies
\[
\abs{Z_0} = \bigo(\eps^{\delta}), 
\]
as well as:
\[
\abs{\taueps^{-1} Z_0} = \bigo( 1 ), 
\]
which implies, according to Assumption~\ref{ass:ODE} and Lemma~\ref{lem:techbound}, that
\[
\sup_{t \in [0,+\infty]} \pare{ \abs{Z_t} } =  \bigo(\eps^{\delta}),
\]
as well as
\[
\sup_{t \in [0,+\infty]} \pare{ \abs{ \taueps^{-1} \e^{- \taueps^{-1} t} Z_t} } =  \bigo(1) .
\]

Note that all the assumptions of the present section hold in the following case: (i) the ODE is linear ($F_\eps$ is linear), (ii) the associated matrix $\tau_{\eps}$ is symmetric positive and satisfies
\[
\tau_{\eps}^{-1} \geq C \eps^{1-\delta},
\]
for some $\delta > 0$, (iii) the initial internal state is close to equilibrium in the sense that $\taueps^{-1} Z_0=\bigo(1)$. The assumptions of the present section can be seen as technical generalizations to non-linear ODEs with non-symmetric linear part.

\subsection{Kinetic formulations}

The probability distribution density of the process with internal dynamics at time $t$ with respect to the measure $\d x \, \Max ( \d v ) \, \d y$ is denoted as
$p(x,v,y,t)$, suppressing the dependence on $\eps$ for notational convenience, and evolves according to the Kolomogorov forward evolution equation (or master equation). In the present context, the latter is the following kinetic equation
\begin{equation} \label{e:kinetic}
\partial_t p + \eps v \cdot \nabla_x p + \div_y  \pare{\Feps(y,S(x)) p } = \lambda \pare{S(x)-y} \pare{   R(p)    - p   },
\end{equation}
where 
\[
R(p) := \int_{v \in \Sph^{d-1}} p(\cdot,v,\cdot) \, \Max(dv)
\]
is the operator integrating velocities with respect to $\Max$. Similarly, the distribution density of the process with direct gradient sensing at time $t$ with respect to the measure $\d x \, \Max ( \d v ) $ is denoted as
$p^c(x,v,t)$, and evolves according to
the kinetic/master equation
\begin{equation} \label{e:kinetic_control}
\partial_t p^c + \eps v \cdot \nabla_x p^c = \pare{   R(\lambda^c_\eps(x,v) p)    - \lambda^c_\eps(x,v) p   }.
\end{equation}
We refer to \cite{EthKur86} for the derivation of master equations associated to Markov jump processes, and to \cite{Hillen:2000p4751,ErbOthm04,XueOth09,Chalub:2004p7641,Othmer:2002p4752} for an asymptotic analysis using moment closures.

\section{Asymptotic analysis of individual-based models\label{sec:adv-diff}}

In this section, we investigate in more detail the asymptotic behavior of the process with internal dynamics and the simplified process with direct gradient sensing.   We rigorously prove pathwise convergence to an advection-diffusion limit of both the process with direct gradient sensing (Section \ref{sec:limit-control}) and the process with internal dynamics (Section \ref{sec:limit-internal}).  Choosing the model parameters appropriately, both advection-diffusion limits can be made identical.
		The two proofs are based on an asymptotic expansion of the jump times (the time between two tumble phases), and a comparison with a simpler random walk. They can be split into $4$ steps.
\begin{enumerate}[Step (i)]
             \item Consider the evolution of the bacterium indexed by the number of tumble phases (jump times), as well as a proper Markovian random walk approximating the former. Show tightness of the coupled sequence (in fact their time continuous version obtained by linear interpolation).
             \item Show, using a Gronwall argument, that the two processes have the same advection-diffusion limit. This step relies on an asympotic expansion of the jump times in $\eps$.
               \item Compute the advection-diffusion limit of the Markovian random walk.
\item Use a random time change to transfer the result to the bacteria position process itself. 
          \end{enumerate}

\subsection{Diffusion limit of the process with direct gradient sensing\label{sec:limit-control}}

We first derive a pathwise advection-diffusion limit of the process with direct gradient sensing (\ref{eq:cprocess}) when $\eps \to 0$ using classical probabilistic arguments. It should be noted that this diffusive limit as already been extensively studied in the context of bacterial chemotaxis, see, e.g.,\cite{Hillen:2000p4751} for justifications of Hilbert expansions at the PDE level, and \cite{Stro74} where the diffusive limit of some more general class of stochastic processes is suggested. However, we give the probabilistic arguments {\it in extenso} since they will be of interest for the analysis of both the model with internal state, and of the coupling in the variance-reduced numerical scheme, discussed in the companion paper \cite{variance}. Moreover, they do not seem to appear as such in the literature.

Diffusive times are denoted by
\[
\bt:=t  \eps^2,
\]
and the process with direct gradient sensing considered on diffusive time scales by
\[
X^{c,\eps}_\bt:=X^c_{\bt/\eps^2}.
\]
This section shows that this process converges for $\eps \to 0$ towards an advection-diffusion process, satisfying the stochastic differential equation (SDE)
\begin{equation}\label{eq:cprocess_hydro}
\d X_{\bar{t}}^{c,0} =\pare{\frac{D A_0(X_{\bar{t}}^{c,0})}{\lambda_0} \d \bt + \pare{\frac{2D}{\lambda_0}}^{1/2} \d W_{\bar{t}}},
\end{equation}
where $\bt \mapsto  W_\bt$ is a standard Brownian motion, and the diffusion matrix is given by the covariance of the Maxwellian distribution:
\begin{equation}\label{eq:maxw}
D =  \int_{\Sph^{d-1}} v \otimes v \, \Max(dv) \in \R^{d \times d}.
\end{equation}
In particular, this result implies at the level of the Kolomogorov/master evolution equation, that the position bacterial density \begin{equation}\label{eq:cdensity}
	n^{c,\eps}(x,\bt):= n^c(x,\bt/\eps^2):=\int_{\V}p^c(x,v,\bt/\eps^2) \,  \Max(\d v) 
\end{equation}
 satisfies the advection-diffusion equation \begin{equation}\label{eq:cdens_hydro}
\partial_\bt n^{c,0} = \frac{1}{\lambda_0}\div_x \pare{ D \nabla_x n^{c,0} - D A_0(x) n^{c,0}}
\end{equation}
on diffusive time scales as $\epsilon \to 0$. 

The proof relies on a perturbation analysis of the jump time differences, which is given by the following lemma~:
\begin{lem}\label{lem:dt-control}
The difference between two jump times of the process with direct gradient sensing satisfies
\begin{equation}\label{eq:deltaTc}
\Delta T_{n+1}^{c} :=  T_{n+1}^{c}- T_{n}^{c} = \frac{\theta_{n+1}}{\lambda_0} + \eps \frac{\theta_{n+1}}{\lambda_0^2} A_\eps^T(X_{T^c_n}^c) \Vj_n + \theta_{n+1} \bigo (\eps^2  ).
\end{equation}
\end{lem}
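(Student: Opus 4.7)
The plan is to exploit that between two consecutive tumbles the process with direct gradient sensing travels along a straight line with constant velocity, which turns the defining jump-condition into a scalar integral equation for $\Delta T_{n+1}^c$ that we can invert perturbatively in $\eps$.

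On $[T_n^c, T_{n+1}^c]$ one has $V_t^c \equiv \Vj_n$ and hence $X_t^c = X_{T_n^c}^c + \eps(t-T_n^c)\Vj_n$. Substituting into the definition \eqref{eq:control_rate} of $\lambda^c_\eps$ and Taylor-expanding the smooth, uniformly bounded field $A_\eps$ about $X_{T_n^c}^c$ yields
\[
\lambda_\eps^c(X_t^c,\Vj_n) = \lambda_0 - \eps A_\eps^T(X_{T_n^c}^c)\Vj_n + \bigo(\eps^2) + \bigo\pare{\eps^2(t-T_n^c)}.
\]
Integrating this identity over $[T_n^c, T_{n+1}^c]$ and invoking the jump condition $\int \lambda_\eps^c \,\d t = \theta_{n+1}$ gives an implicit equation for $\Delta T_{n+1}^c$:
\[
\theta_{n+1} = \lambda_0 \Delta T_{n+1}^c - \eps \Delta T_{n+1}^c\, A_\eps^T(X_{T_n^c}^c)\Vj_n + \Delta T_{n+1}^c\,\bigo(\eps^2) + \bigo\pare{\eps^2 (\Delta T_{n+1}^c)^2}.
\]

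The next ingredient is the a priori bound obtained from the uniform lower bound $\lambda_\eps^c \geq \lambda_{\min}$ in \eqref{eq:rateboundc}: integrating directly gives $\lambda_{\min}\Delta T_{n+1}^c \leq \theta_{n+1}$, hence $\Delta T_{n+1}^c = \bigo(\theta_{n+1})$ uniformly in $n$ and $\eps$. Solving the implicit equation to leading order yields $\Delta T_{n+1}^c = \theta_{n+1}/\lambda_0 + \bigo(\eps) \Delta T_{n+1}^c$, and substituting this first approximation back into the $\eps$-coefficient term (a single bootstrap step) gives
\[
\Delta T_{n+1}^c = \frac{\theta_{n+1}}{\lambda_0} + \eps\,\frac{\theta_{n+1}}{\lambda_0^2} A_\eps^T(X_{T_n^c}^c)\Vj_n + \theta_{n+1} \bigo(\eps^2),
\]
all remainders being absorbed into the last term by means of $\Delta T_{n+1}^c = \bigo(\theta_{n+1})$.

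The only genuine obstacle is the bookkeeping of error terms, since the exponential variable $\theta_{n+1}$ is unbounded and the a priori estimate for $\Delta T_{n+1}^c$ is only linear in $\theta_{n+1}$: one has to check that every remainder can be written in the form $\theta_{n+1}\bigo(\eps^2)$ with constants independent of $n$ and $\eps$. This follows from the uniform smoothness of $A_\eps$ recalled after \eqref{eq:rateboundc}, the $\eps$-independent nature of the $\bigo(\eps^2)$ in \eqref{eq:control_rate}, and the crude linear bound $\Delta T_{n+1}^c\leq\theta_{n+1}/\lambda_{\min}$; the rest of the argument is Taylor expansion and one bootstrap.
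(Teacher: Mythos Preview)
Your approach is essentially the paper's: freeze the velocity on $[T_n^c,T_{n+1}^c]$, Taylor-expand $A_\eps$ about $X^c_{T_n^c}$, integrate, and invert. The only cosmetic difference is that the paper writes the inversion as a geometric expansion of $\theta_{n+1}/(\lambda_0-\eps A_\eps^T\Vj_n+\bigo(\eps^2))$, whereas you phrase it as a one-step bootstrap; these are the same computation.

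One bookkeeping point you flag but do not quite resolve: the term $\bigo(\eps^2(\Delta T_{n+1}^c)^2)$ coming from $\int \eps^2(t-T_n^c)\,\d t$ is, via the crude bound $\Delta T_{n+1}^c\le \theta_{n+1}/\lambda_{\min}$, of size $\theta_{n+1}^2\,\bigo(\eps^2)$, not $\theta_{n+1}\,\bigo(\eps^2)$. The paper's own proof absorbs the same term into a single $\bigo(\eps^2)$ and is equally informal here. This has no downstream effect, since every later use of the lemma sums the remainder against $\eps$ and takes expectation, and $\E\theta_{n+1}^2<\infty$; but strictly speaking the honest remainder is $(\theta_{n+1}+\theta_{n+1}^2)\bigo(\eps^2)$.
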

\begin{proof}
Using \eqref{eq:control_rate}, we get
\begin{align*}
 \lambda^c_\eps(X_t^c,V_t^c) & = \lambda_0 - \eps   A_\eps^T(X_t^c)  \Vj_n  + \bigo (\eps^2 ) \\
& = \lambda_0 - \eps    A_\eps^T (X^c_{T^c_n}) \Vj_n + (\norm{\nabla A}_\infty +1)\bigo (\eps^2 ) \\
& = \lambda_0 - \eps    A_\eps^T (X^c_{T^c_n}) \Vj_n + \bigo (\eps^2 ),
\end{align*}
where we have used differentiability of $A_\eps(x)$, and the property $\|X_t^c-X^c_{T^c_n}\|\le \eps(t-T^c_n)$.
Integrating on the time interval $[T^c_n,T^c_{n+1}]$ yields
\[
\theta_{n+1} = \Delta T_{n+1}^{c} \pare{ \lambda_0 - \eps    A_\eps^T(X^c_{T^c_n})  \Vj_n + \bigo (\eps^2 )  },
\]
so that
\begin{align*}
  \Delta T_{n+1}^{c} &= \frac{\theta_{n+1} }{\lambda_0 - \eps    A_\eps^T(X^c_{T^c_n})  \Vj_n + \bigo (\eps^2 ) } \\
&=  \frac{\theta_{n+1} }{\lambda_0} \pare{1 + \eps \frac{1}{\lambda_0} A_\eps^T(X^c_{T^c_n})  \Vj_n}    + \theta_{n+1} \bigo (\eps^2 ) .
\end{align*}
\end{proof}

We also need the following technical lemma~:

\begin{lem}[Random time change]\label{l:techrandt}
 Let $\pare{ \bar t \mapsto \alpha_\eps(\bar t) } _{\eps \geq 0}$ a sequence of continuous, strictly increasing random time changes in $\R^+$, which, for $\eps\to 0$, converges in distribution (on any time interval for the uniform convergence topology) towards the identity function $\bar{t} \mapsto \bar{t}$. Let $\pare{ \bar t \to X_\eps(\bar t) } _{\eps \geq 0}$ a sequence of continuous random processes, such that 
\[
  \bar t \to X^\eps_{ \alpha_\eps(\bar t) }
\]
converges in distribution when $\eps \to 0$ (on any time interval for the uniform convergence topology) towards a continuous process $\bar t \to X^0_{ \bar t }$. Then the sequence $\pare{ \bar t \to X^\eps_{\bar t } } _{\eps \geq 0}$ also converges in distribution when $\eps \to 0$ (on any time interval for the uniform convergence topology) towards $\bar t \to X^0_{\bar t}$.
\end{lem}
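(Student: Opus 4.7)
My plan is to reduce the convergence in distribution to almost sure convergence via Skorokhod's representation theorem, and then use elementary uniform estimates, exploiting the fact that the limit of the time change $\alpha_\eps$ is the deterministic identity.

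First I would upgrade the two marginal convergences to a joint convergence. Because the limit of $\alpha_\eps$ is the deterministic map $\bar t \mapsto \bar t$, a standard Slutsky-type argument gives that the pair
\[
\pare{ \bar t \mapsto X^\eps_{\alpha_\eps(\bar t)}, \; \bar t \mapsto \alpha_\eps(\bar t) }
\]
converges jointly in distribution, on any interval $[0,T]$ equipped with the uniform topology, to $(X^0, \mathrm{id})$. I would then invoke Skorokhod's representation theorem to realise, on some auxiliary probability space, versions of these processes (still denoted the same way) such that the joint convergence is almost sure and uniform on $[0,T]$.

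Next, since $\alpha_\eps$ is continuous and strictly increasing, it has a continuous inverse $\beta_\eps := \alpha_\eps^{-1}$. Uniform convergence of $\alpha_\eps$ to the identity on $[0, T']$, for $T'$ slightly larger than $T$, implies uniform convergence of $\beta_\eps$ to the identity on $[0,T]$ almost surely (this is a classical consequence of monotonicity plus the fact that the limit is continuous and strictly increasing). Writing $Y^\eps_{\bar t} := X^\eps_{\alpha_\eps(\bar t)}$, the key identity is $X^\eps_{\bar t} = Y^\eps_{\beta_\eps(\bar t)}$, so I would estimate
\[
\sup_{\bar t \in [0,T]} \abs{X^\eps_{\bar t} - X^0_{\bar t}} \leq \sup_{\bar s \in [0,T]} \abs{Y^\eps_{\bar s} - X^0_{\bar s}} + \sup_{\bar t \in [0,T]} \abs{X^0_{\beta_\eps(\bar t)} - X^0_{\bar t}}.
\]
The first term vanishes as $\eps \to 0$ by construction. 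The second vanishes because $X^0$ is continuous, hence uniformly continuous on compact intervals, while $\beta_\eps \to \mathrm{id}$ uniformly on $[0,T]$. This yields almost sure uniform convergence of $X^\eps$ to $X^0$ on $[0,T]$, which in particular implies convergence in distribution for the uniform topology, and transfers back to the original probability space since convergence in distribution only depends on laws.

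The main obstacle I anticipate is the joint-convergence step: one must verify carefully that marginal convergence in distribution of $X^\eps \circ \alpha_\eps$ together with convergence of $\alpha_\eps$ to a \emph{deterministic} limit suffices to give joint convergence in $C([0,T]) \times C([0,T])$. The deterministic nature of the limit of $\alpha_\eps$ is essential here; without it, joint convergence would require additional tightness and coupling arguments. The rest of the argument (inversion of time changes, continuity of composition) is standard once the joint almost-sure convergence is available.
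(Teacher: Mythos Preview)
Your proposal is correct and follows essentially the same route as the paper: both arguments first upgrade to joint convergence via the deterministic limit of $\alpha_\eps$, pass to almost sure convergence by Skorokhod's representation, and then bound $\sup_{[0,T]}\abs{X^\eps_{\bar t}-X^0_{\bar t}}$ by a term $\sup\abs{X^\eps_{\alpha_\eps(\bar s)}-X^0_{\bar s}}$ plus a term controlled by the uniform continuity of $X^0$ together with $\alpha_\eps^{-1}\to\mathrm{id}$. The only cosmetic difference is the domain bookkeeping: the paper truncates with $\wedge 1$ and handles the range mismatch via the bound $\alpha_\eps^{-1}(T)\le 2T$ plus an indicator $\one_{\alpha_\eps(2T)\le T}$, whereas you enlarge to $T'>T$; note that your displayed inequality should accordingly carry $\sup_{\bar s\in[0,T']}$ (or $[0,\beta_\eps(T)]$) rather than $[0,T]$ in the first term.
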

\begin{proof}
Since by assumption $(\bt \mapsto \alpha^{\eps}(\bt))_{\eps \geq 0}$ converges to a deterministic limit, the coupled process $\bt \mapsto (X^{\eps}_{\alpha_{\eps}(\bt)}, \alpha_{\eps}(\bt) )$ also converges in distribution to $\bt \mapsto (X^{0}_{\bt}, \bt)$, and we can consider a Skorokhod embedding (a new probability space) associated with a $\eps$-sequence where this convergence holds almost surely. Then we can write ($\wedge$ denotes the minimum, and $\one$ the identity function):
\begin{align*}
    &\sup_{\bt \in [0,T]} \abs{ X^{\eps}_{\bt} - X^{0}_{\bt}} \wedge 1  \\
&\quad \leq \sup_{\bt \in [0,T]} \abs{ X^{\eps}_{\bt} - X^{0}_{\alpha^{-1}_{\eps}(\bt)} } \wedge 1 + \sup_{\bt \in [0,T]} \abs{ X^{0}_{\bt} - X^{0}_{\alpha^{-1}_{\eps}(\bt)}  } \wedge 1  , \\
& \quad \leq \sup_{\bs \in [0,\alpha^{-1}_{\eps}(T)]} \abs{ X^{\eps}_{\alpha^{\eps}(\bs)} - X^{0}_{\bs} } \wedge 1 + \sup_{\bs \in [0,\alpha^{-1}_{\eps}(T)]} \abs{ X^{0}_{\alpha_{\eps}(\bs)} - X^{0}_{\bs}  } \wedge 1 \\
& \quad \leq \sup_{\bs \in [0, 2T]} \abs{ X^{\eps}_{\alpha^{\eps}(\bs)} - X^{0}_{\bs} } \wedge 1 + \sup_{\bs \in [0, 2T]} \abs{ X^{0}_{\alpha_{\eps}(\bs)} - X^{0}_{\bs}  } \wedge 1 + 2 \times \one_{ \alpha^{\eps}(2T) \leq T}
\end{align*}
and the right hand side converges almost surely to $0$ using the (uniform) continuity of $\bt \mapsto X^{0}_{\bt}$.
\end{proof}

Then the following result holds~:
\begin{pro}\label{pro:convcoupl} Consider the gradient sensing process described in Section~\ref{sec:modelgrad}.
The process $\bt \mapsto X_{\bar{t}}^{c,\eps} = X_{t/\eps^2} ^c$ converges in distribution (with respect to the topology of uniform convergence) towards $\bt \mapsto X_{\bar{t}}^{c,0}$, solution of the SDE (\ref{eq:cprocess_hydro}). As a consequence, the density $n^{c,\eps}$ of the process with direct gradient sensing defined in \eqref{eq:cdensity} converges weakly towards $n^{c,0}$, solution of (\ref{eq:cdens_hydro}). 
\end{pro}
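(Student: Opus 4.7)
The plan follows the four-step scheme outlined at the start of Section~\ref{sec:adv-diff}. First, I reindex the trajectory by the jump number: between tumbles the motion is affine, so
\begin{equation*}
X^c_{T^c_{n+1}} - X^c_{T^c_n} = \eps \Delta T^c_{n+1} \Vj_n,
\end{equation*}
and Lemma~\ref{lem:dt-control} gives the explicit expansion
\begin{equation*}
X^c_{T^c_{n+1}} - X^c_{T^c_n} = \frac{\eps\theta_{n+1}}{\lambda_0}\Vj_n + \frac{\eps^2 \theta_{n+1}}{\lambda_0^2}\bigl(A_\eps^T(X^c_{T^c_n})\Vj_n\bigr)\Vj_n + \theta_{n+1}\bigo(\eps^3).
\end{equation*}
I would introduce a comparison Markovian random walk $(\tilde X^\eps_n)_{n\ge 0}$ with exactly these first two terms but $A_\eps$ replaced by its uniform limit $A_0$. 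Both sequences are interpolated linearly so that the discrete index $n$ corresponds to a continuous diffusive time $n\eps^2/\lambda_0$.

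For step (i), tightness in the uniform topology follows from Kolmogorov's criterion: since $\theta_{n+1}$ is exponential and $|\Vj_n|=1$, the increments possess moments of all orders uniformly in $\eps$, and the centered part of the sum is a martingale whose quadratic variation is of order $\eps^2 n = \bigo(\bt)$ on the diffusive scale, giving the required H\"older control. For step (ii), the discrepancy $e_n := X^c_{T^c_n} - \tilde X^\eps_n$ satisfies a recursion of the form
\begin{equation*}
|e_{n+1} - e_n| \leq \eps^2 \theta_{n+1}\bigo\bigl(|e_n|\bigr) + \eps^2 \theta_{n+1}\norm{A_\eps - A_0}_\infty + \theta_{n+1}\bigo(\eps^3),
\end{equation*}
and a discrete Gronwall argument applied over the $n \sim \bt\lambda_0/\eps^2$ steps needed to reach diffusive time $\bt$, combined with the assumption that $\norm{A_\eps - A_0}_\infty \to 0$, yields $\sup_{n}|e_n| \to 0$ in probability.

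Step (iii), the identification of the diffusion limit of $\tilde X^\eps$, is the main technical point. Using $\int v\,\Max(dv)=0$, \eqref{eq:maxw}, $\E[\theta_n]=1$ and $\E[\theta_n^2]=2$, a direct computation gives
\begin{equation*}
\E[\tilde X^\eps_{n+1} - \tilde X^\eps_n \mid \mathcal F_n] = \frac{\eps^2}{\lambda_0^2} D\,A_0(\tilde X^\eps_n) + \bigo(\eps^3),
\end{equation*}
together with an analogous conditional covariance $(2\eps^2/\lambda_0^2)D + \bigo(\eps^3)$. Aggregating over the $\lambda_0/\eps^2$ steps of one diffusive-time unit therefore produces a drift $DA_0/\lambda_0$ and infinitesimal covariance $2D/\lambda_0$, matching \eqref{eq:cprocess_hydro}. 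Combined with the tightness of step (i), a Lindeberg condition (trivial given the exponential moments and bounded velocities), and the uniqueness of the limit SDE, the martingale-problem characterisation of weak convergence for Markov chains with small increments \cite{EthKur86} delivers the convergence of the interpolated $\tilde X^\eps$ to the solution of \eqref{eq:cprocess_hydro}.

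Finally, step (iv) transfers this to the time-indexed process. Setting $N^c(t):=\max\{n:\,T^c_n\le t\}$ and $\alpha_\eps(\bt) := (\eps^2/\lambda_0)\, N^c(\bt/\eps^2)$, the uniform bounds \eqref{eq:rateboundc} combined with the strong law for renewal-type counting processes ensure that $\alpha_\eps(\bt)\to \bt$ almost surely, uniformly on compacts. Lemma~\ref{l:techrandt} then upgrades the jump-indexed convergence to convergence in law of $\bt \mapsto X^{c,\eps}_{\bt}$ in the uniform topology, and the weak convergence of the densities \eqref{eq:cdensity} to \eqref{eq:cdens_hydro} follows. I expect step (iii) to be the principal obstacle, since the $\eps$-dependent drift contribution must be tracked carefully and the compactness/identification argument requires verifying several hypotheses of the invariance principle simultaneously.
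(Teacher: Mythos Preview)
Your proposal is correct and follows essentially the same four-step scheme as the paper: jump-time expansion via Lemma~\ref{lem:dt-control}, comparison with a Markovian random walk, tightness plus Gronwall, identification of the limiting SDE, and a random time change. The only noteworthy technical slip is in step~(iv): your $\alpha_\eps(\bt)=(\eps^2/\lambda_0)N^c(\bt/\eps^2)$ is piecewise constant, hence neither continuous nor strictly increasing, so Lemma~\ref{l:techrandt} does not apply to it as written (and the direction in which you compose is the inverse of the one the lemma is stated for); the paper avoids this by taking $\alpha_{c,\eps}$ to be the \emph{linear interpolation} of the jump times $(\eps^2 T^c_n)_n$, which is continuous and strictly increasing and satisfies $X^{c,\eps}_{\alpha_{c,\eps}(\bt)}=\tilde X^{c,\eps}_{\bt}$ exactly.
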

\begin{proof}
	The proof requires the definition of some auxiliary processes. First, we perform a random time change on the process of positions of the bacterium, so that the jump times become deterministic, and will occur on regular time instances separated by $\eps^2/\lambda_0$ on the diffusive time scale; the randomness is then transferred to the jump size. The auxiliary process is thus defined as
\begin{equation}
  \label{eq:sum_X_c}
  \tilde{X}^{c,\eps}_{\bt} 
= X^c_ 0 + \sum_{n=0}^{\lfloor  \lambda_0 \bt/\eps^2\rfloor - 1} \eps \Delta T^c_{n+1} \Vj_{n} + \underbrace{\pare{ \lambda_0 \bt/\eps^2 -\lfloor  \lambda_0 \bt/\eps^2\rfloor}\eps \Delta T^c_{\lfloor  \lambda_0 \bt/\eps^2\rfloor+1} \Vj_{\lfloor  \lambda_0 \bt/\eps^2\rfloor}}_{r_1}.
\end{equation} 
The latter will be compared with a classical drifted random walk defined by the Markov chain
\begin{equation}
  \label{eq:Markov}
  \begin{cases}
    \Xi^c_0 &= X^c_ 0 \\
\Xi^c_{n+1} &= \Xi^c_{n} + \eps \dfrac{\theta_{n+1} \Vj_n}{\lambda_0} + \eps^2 \dfrac{\theta_{n+1}\Vj_n}{\lambda_0^2} A^T(\Xi^c_{n}) \Vj_n,\\
&= \Xi^c_{n} + \eps \left(\dfrac{\theta_{n+1} }{\lambda_0} \pare{1 + \eps \dfrac{1}{\lambda_0} A^T(\Xi^c_{n})  \Vj_n}\right)\Vj_n,  
  \end{cases}
\end{equation}
to which we can associate a continuous process defined on a similar diffusive time scale by interpolation,
\begin{equation}
  \label{eq:sum_Xi_c}
  \tilde{\Xi}^{c,\eps}_{\bt} 
= X^c_ 0 + \pare{\sum_{n=0}^{\lfloor  \lambda_0 \bt/\eps^2\rfloor - 1} \pare{\Xi^c_{n+1}-\Xi^c_n}}+ \underbrace{\pare{ \lambda_0 \bt/\eps^2 -\lfloor  \lambda_0 \bt/\eps^2\rfloor}\pare{ \Xi^c_{\lfloor  \lambda_0 \bt/\eps^2\rfloor + 1}-\Xi^c_{\lfloor  \lambda_0 \bt/\eps^2\rfloor } }}_{r_2}.
\end{equation}

Now the computation of the diffusive limit of the \emph{sequence} of processes $\left( \bt \mapsto  \tilde{X}^{c,\eps}_{\bt}\right)_\eps$ for $\eps\to 0$ relies on four steps. \\
          \begin{enumerate}[Step (i)]
             \item Show tightness of the coupled sequence $\left(\bt \mapsto (\tilde{X}^{c,\eps}_{\bt},\tilde{\Xi}^{\eps}_{\bt})\right)_{\eps \geq 0}$.  To this end, decompose the leading terms in these processes into a ``martingale'' term and a drift term.
             \item Show, using a Gronwall argument, that the process describing the bacterial position at jump times $\tilde{X}^{c,\eps}_{\bt}$ and the classical Markovian random walk $\tilde{\Xi}^{\eps}_{\bt}$ have the same advection-diffusion limit.
               \item Compute the advection-diffusion limit of the Markovian random walk  $\tilde{\Xi}^{\eps}_{\bt}$ (and hence of the process at jump times $\tilde{X}^{c,\eps}_{\bt}$ ).  
\item Use a random time change to transfer the result on bacteria position process ${X}^{c,\eps}_{\bt}$ itself. 
          \end{enumerate}

Step (i). Tightness of the coupled sequence $\bt \mapsto (\tilde{X}^{c,\eps}_{\bt},\tilde{\Xi}^{\eps}_{\bt})$. \\
Let us recall that according to Kolmogorov's criterion, a sequence of continuous processes $\pare{ \bt \mapsto \omega_{\bt}^\eps}_{\eps \geq 0}$ is tight in the space $\mathcal C([0,\bar{T}], \R^d)$ of continuous processes endowed with uniform convergence if there exists a constant $C$ independent of $\eps$ such that
\begin{equation}
\label{eq:Kolmogorov}
  \E \abs{\omega_{\bt}^\eps - \omega_{\bs}^\eps}^4 \leq C\pare{\bt - \bs}^2,
\end{equation}
for any $0 \leq \bs \leq \bt \leq \bar{T}$. In our case, checking~\eqref{eq:Kolmogorov} for the coupled process $\bt \mapsto (\tilde{X}^{c,\eps}_{\bt},\tilde{\Xi}^{c,\eps}_{\bt})$ is highly standard (see \cite{StrVarSri06}).
Remark that it is sufficient to check the criteria for each of the two separately. We recall the computation for the process $\bt \mapsto \tilde{\Xi}^{c,\eps}_{\bt}$, the case of $\bt \mapsto \tilde{X}^{c,\eps}_{\bt}$ being similar using~\eqref{eq:deltaTc}. First, one can check~\eqref{eq:Kolmogorov} when $\bt - \bs \leq \lambda_0 \eps^2$. Indeed, in the latter case, only the rest $r_2$ (see equation~\eqref{eq:sum_Xi_c})  is involved and
\[
\E \abs{\tilde{\Xi}^{c,\eps}_{\bt} -\tilde{\Xi}^{c,\eps}_{\bs}  }^4 \leq C \frac{\pare{\bt - \bs}^4}{\eps^8}\eps^4 \leq C \pare{\bt - \bs}^2,
\]
where we have used that  
\begin{equation}\label{eq:est-eps-here}
\abs{ \Xi^c_{\lfloor  \lambda_0 \bt/\eps^2\rfloor + 1}-\Xi^c_{\lfloor  \lambda_0 \bt/\eps^2\rfloor } } \leq C\eps.
\end{equation}
Assume now that $\bt - \bs > \lambda_0 \eps^2$. The terms coming from $r_2$ can be easily controlled, since, again using~\eqref{eq:est-eps-here},
\[
\E \abs{r_2}^4 \leq C \eps^4 \leq C \pare{\bt - \bs}^2.
\]
The terms of order~$\eps$ in the definition~\eqref{eq:Markov} (defining a martingale) can be controlled by expanding the sum and eliminating the terms with null average. It yields:
\begin{align*}
  \E \abs{ \sum_{n=\lfloor  \lambda_0 \bs/\eps^2\rfloor}^{\lfloor  \lambda_0 \bt/\eps^2\rfloor - 1} \eps \frac{\theta_{n+1} \Vj_n}{\lambda_0}  }^4  &\leq C \eps^4 \pare{\lfloor  \lambda_0 \bt/\eps^2\rfloor - \lfloor  \lambda_0 \bs/\eps^2\rfloor}^2 \\
&\leq C \eps^4 \pare{\bt/\eps^2 -\bs/\eps^2 }^2 + C \eps^4   \\
&\leq  C \pare{\bt - \bs}^4 \leq C \pare{\bt - \bs}^2.
\end{align*}
In the last line, we use the fact that $\pare{\bt - \bs}^2 \leq 4 \bar{T}^2 \leq C$. Finally the terms of order~$\eps^2$ in the definition~\eqref{eq:Markov} (defining a drift) can be controlled as
\begin{align*}
  \E \abs{ \sum_{n=\lfloor  \lambda_0 \bs/\eps^2\rfloor}^{\lfloor  \lambda_0 \bt/\eps^2\rfloor - 1} \eps^2 \frac{\theta_{n+1}\Vj_n}{\lambda_0^2} A_\eps^T(\Xi^c_{n}) \Vj_n }^4 & \leq  C \eps^8 \pare{\lfloor  \lambda_0 \bt/\eps^2\rfloor - \lfloor  \lambda_0 \bs/\eps^2\rfloor}^4 \\
& \leq C \eps^8 \pare{\bt/\eps^2 -\bs/\eps^2 }^4 + C\eps^8   \\
& \leq C \pare{\bt - \bs}^2,
\end{align*}
where we again use the fact that $\pare{\bt - \bs}^2 \leq 4 \bar{T}^2 \leq C$. 
We conclude that \[
\E \abs{\tilde{\Xi}^{c,\eps}_{\bt} -\tilde{\Xi}^{c,\eps}_{\bs}  }^4 \leq C \pare{\bt - \bs}^2,
\]
and hence the sequence is tight.

Step (ii). Comparison of the limits.\\
We can now use a Gronwall argument to show that $\bt \mapsto \tilde{X}^{c,\eps}_{\bt}$ and $\bt \mapsto \tilde{\Xi}^{\eps}_{\bt}$ have the same limit (in distribution) when $\eps \to 0$. First, since the coupled process is tight, we can use the Skorokhod embedding theorem, and consider a new probability space where an $\eps$-sequence converge almost surely (in uniform norm) towards a limiting process $\bt \mapsto (\tilde{X}^{c,0}_{\bt},\tilde{\Xi}^{0}_{\bt})$. Then, we compare the two processes using~\eqref{eq:deltaTc} and~\eqref{eq:Markov}. Incorporating the terms $r_1$ (see equation~\eqref{eq:sum_X_c}) and $r_2$ in the sum, we get
\begin{align*}
  \abs{\tilde{X}^{c,\eps}_{\bt} - \tilde{\Xi}^{\eps}_{\bt}} \leq\sum_{n=0}^{\lfloor  \lambda_0 \bt/\eps^2\rfloor} \pare{ \eps^2\frac{\theta_{n+1}}{\lambda_0^2} \abs{  \pare{A_\eps(X_{T^c_n}^c)-A_\eps(\Xi^c_{n})}}  + \theta_{n+1} \bigo (\eps^3 )},
\end{align*}
so that taking expectation yields
\[
\E\abs{\tilde{X}^{c,\eps}_{\bt} - \tilde{\Xi}^{\eps}_{\bt}} \leq C \sum_{n=0}^{\lfloor  \lambda_0 \bt/\eps^2\rfloor} \eps^2 \E\abs{  \pare{A_\eps(X_{T^c_n}^c)-A_\eps(\Xi^c_{n})}} + \bigo(\eps).
\]
Then, since the velocity of both processes is of $\bigo{\eps}$, and using the fact that $A_\eps$ is Lipschitz uniformly in $\eps$, we get
\[
\abs{  \pare{A_\epsilon(X_{T^c_n}^c)-A_\epsilon(\Xi^c_{n})}} \leq C \eps^{-2} \int_{n \eps^2/\lambda_0}^{(n+1) \eps^2/\lambda_0}  \abs{\tilde{X}^{c,\eps}_{\bs} - \tilde{\Xi}^{\eps}_{\bs} } \, d \bs + \theta_{n+1}\bigo(\eps).
\]
Finally we get
\begin{align*}
 \E  \abs{\tilde{X}^{c,\eps}_{\bt} - \tilde{\Xi}^{\eps}_{\bt}} \leq C \int_0^{\bt}\E\abs{\tilde{X}^{c,\eps}_{\bs} - \tilde{\Xi}^{\eps}_{\bs} }\, d \bs + \bigo(\eps),
\end{align*}
and applying a Gronwall argument yields
\begin{align*}
 \E  \abs{\tilde{X}^{c,\eps}_{\bt} - \tilde{\Xi}^{\eps}_{\bt}} \leq \bigo(\eps).
\end{align*}
Taking the limit $\eps \to 0$ shows that $\tilde{X}^{c,0}_{\bt} = \tilde{\Xi}^{0}_{\bt}$ almost surely.

Step (iii). Computation of the limit.\\
Standard results (see \cite{StrVarSri06}) show that the sequence of processes $\bt \mapsto \tilde{\Xi}^{\eps}_{\bt}$ satisfying~\eqref{eq:Markov}
converges in distribution for the uniform norm towards the process solution to the SDE with diffusion matrix
\[
a := \lambda_0 \E\pare{ \frac{\theta_{n+1}^2\Vj_n \otimes \Vj_n}{\lambda_0^2}  } = \dfrac{2 D}{\lambda_0},
\]
and drift vector field
\[
b(x) := \lambda_0 \E\pare{ \frac{\theta_{n+1}}{\lambda_0^2} \Vj_n A^T(x) \Vj_n   } = \dfrac{D A(x)}{\lambda_0},
\]
with $D$ given by \eqref{eq:maxw}, which yields the SDE~\eqref{eq:cprocess_hydro}. By Step (ii), the same holds true for~$\bt \mapsto \tilde{X}^{c,\eps}_{\bt}$.

Step (iv). Random time change.\\
It remains to show that $\bt \to \tilde{X}^{c,\eps}_{\bt}$ and $\bt \to X^{c,\eps}_{\bt} $ converge in distribution to the same limit. Let us introduce the random time change
\begin{align}\label{eq:timechange_c}
   \alpha_{c,\eps}(\bt) & =  \eps^2 T_{\lfloor  \lambda_0 \bt/\eps^2\rfloor}^{c} + \eps^2 \pare{T_{\lfloor  \lambda_0 \bt/\eps^2\rfloor +1}^{c}  - T_{\lfloor  \lambda_0 \bt/\eps^2\rfloor}^{c}}\pare{ \lambda_0 \bt/\eps^2 - \lfloor  \lambda_0 \bt/\eps^2\rfloor } \\
& = \sum_{n=1}^{\lfloor  \lambda_0 \bt/\eps^2\rfloor} \eps^2 \Delta T_{n}^{c}  +  \eps^2\pare{\Delta T_{\lfloor  \lambda_0 \bt/\eps^2\rfloor +1}^{c}  }\pare{ \lambda_0 \bt/\eps^2 - \lfloor  \lambda_0 \bt/\eps^2\rfloor },
\end{align}
which is exactly the linear interpolation of the jump times $\pare{T^c_n}_{n\geq 0}$, and by construction is almost surely strictly increasing and continuous. Using \eqref{eq:deltaTc}, we get
\[
 \alpha_{c,\eps}(\bt) = \frac{\eps^2}{\lambda_0}\sum_{n=1}^{\lfloor  \lambda_0 \bt/\eps^2\rfloor}  \theta_{n}  + \frac{\bigo (\eps^3)}{\lambda_0}\sum_{n=1}^{\lfloor  \lambda_0 \bt/\eps^2\rfloor}  (\theta_{n} +1),
\]
so that $\bt \mapsto \alpha_{c,\eps}(\bt)$ converges in distribution on any time interval $[0,T]$ for the uniform convergence topology towards the deterministic function $\bt \mapsto \bt$ (see e.g. \cite{Kus84}). By construction,
\begin{align}\label{eq:alphatilde_c}
X^{c,\eps}_{\alpha_{c,\eps}(\bt)} &= \tilde{X}^{c,\eps}_{\bt},
\end{align}
so that we can apply the technical Lemma~\ref{l:techrandt} to conclude.
\end{proof}

\begin{rem}
	In one spatial dimension, one can describe the evolution of the distribution density $p^c(x,v,t)$ of the process with direct gradient sensing as follows \cite{ErbOthm04}. We introduce the distributions of left-moving and right-moving particles $p^c_{\pm}(x,t)=p^c(x,v=\pm 1,t)$, and write, using \eqref{e:kinetic}:
\begin{equation} \label{eq:cprocess_hydro1d}
\system{
	 & \partial_t  p^c_++\epsilon\partial_x p^c_+=-\frac{\lambda^c_\eps(x,+1)}{2}p_+^c+ \frac{\lambda^c_\eps(x,-1)}{2}p_-^c &\\
	 & \partial_t  p^c_--\epsilon\partial_x p^c_-=\frac{\lambda^c_\eps(x,+1)}{2}p_+^c -\frac{\lambda^c_\eps(x,-1)}{2}p_-^c &  
	 }.
 \end{equation}
If we assume $\lambda^c_\eps(x,v)$ to be a linear function, 
\begin{equation}\label{eq:lin-turn-1D}
\lambda^c_\eps(x,v) = \lambda_0 - \eps A(x)  v,
\end{equation}
is easy to see that equation \eqref{eq:cprocess_hydro1d} is equivalent to 
\begin{equation}
\partial^2_t n^c + \lambda_0 \partial_t n^c = \partial_x \pare{\epsilon^2 \partial_x n^c - \eps^2 A(x) n^c},
\end{equation}
which, on diffusive time-scales $\bar{t}=t \, \eps^2$, is equivalent to \eqref{eq:cdens_hydro} in the limit when $\eps\to 0$. 
Note, however, that the assumption \eqref{eq:lin-turn-1D} requires $|A(x)|\le\lambda_0/\eps$ to retain positivity of the turning rate.  For $\epsilon$ small enough compared to $A(x)$ (containing $\nabla S(x)$) this requirement is always fulfilled.
\end{rem}

\subsection{Diffusion limit of the process with internal state\label{sec:limit-internal}}

In the same way, a standard probabilistic diffusion approximation argument can be used to derive the pathwise diffusive limit of the process with internal state (\ref{eq:process_noscale}). We denote by 
\[
X^\eps_{\bar{t}} = X_{t/\eps^2},
\]
the process with internal state on diffusive time scales. We will show that this process converges towards a solution of the same advection-diffusion SDE (\ref{eq:cprocess_hydro}) as the process with direct gradient sensing \eqref{eq:cprocess} for $\eps \to 0$, provided $A(x)$ is chosen according to 
\begin{equation}\label{eq:control_field}
A_0(x) = b^T \lim_{\eps \to 0}\frac{\taueps}{\lambda_0 \taueps + \Id } \nabla S(x),
\end{equation}
assuming that the limit exists. The parameters $b$, $\taueps$, and $\lambda_0$ in equation~\eqref{eq:control_field} were introduced in (\ref{eq:tau_eps})-(\ref{eq:lin_rate}) as parameters of the process with internal state, and $\Id \in \R^{n\times n}$ is the identity matrix. 
If we introduce the bacterial density as
\begin{equation}
	n(x,t)= \int_{\Y}\int_{\V}p(x,v,y,t) \Max(\d v) \d y,
\end{equation}
this implies that $n$ satisfies (\ref{eq:cdens_hydro}) on diffusive time scales as $\epsilon \to 0$. 

The proof proceeds along the same lines as the proof of Proposition~\ref{pro:convcoupl}.  However, the derivation of an asymptotic expansion of the jump time differences becomes more involved. 
We need the following auxiliary definitions and lemmas~:
\begin{defn}\label{def:m}
  We denote by $m:\R \to \R^{n \times n}$ the function
  \begin{equation}
    \label{eq:m(t)}
    m(t) := t \taueps - \pare{\Id - {\rm e}^{-t \taueps^{-1}}} \taueps^2,
  \end{equation}
whose derivative is given by
\[
m'(t) = \taueps\pare{\Id - {\rm e}^{-t \taueps^{-1}} }.
\]
\end{defn}

\begin{lem}
For all $t \in \R$, we have
\[
\norm{ m'(t) } \leq t ,
\]
as well as
\[
\norm{ m(t) } \leq \frac{t^2}{2} .
\]
\end{lem}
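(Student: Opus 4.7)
The plan is to represent $m'(t)$ as an integral of the matrix exponential, and then obtain $m(t)$ by integrating once more; both bounds will then follow from a pointwise estimate on $\|e^{-s\tau_\eps^{-1}}\|$ and an elementary integration in $s$.

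First, I would check the antiderivative identity
\begin{equation*}
\frac{d}{ds}\bigl(-\tau_\eps\, e^{-s\tau_\eps^{-1}}\bigr) = e^{-s\tau_\eps^{-1}},
\end{equation*}
and integrate from $0$ to $t$ to obtain the key representation
\begin{equation*}
m'(t) = \tau_\eps\bigl(\Id - e^{-t\tau_\eps^{-1}}\bigr) = \int_0^t e^{-s\tau_\eps^{-1}}\, ds,
\end{equation*}
which is consistent with Definition~\ref{def:m}. Pulling the operator norm inside the integral and using the pointwise bound $\|e^{-s\tau_\eps^{-1}}\| \leq 1$ for $s \geq 0$ immediately yields $\|m'(t)\| \leq t$. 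Since by construction $m(0)=0$, one then has $m(t) = \int_0^t m'(u)\, du$, so a second application of the triangle inequality gives
\begin{equation*}
\|m(t)\| \leq \int_0^{t} u \, du = \frac{t^2}{2}.
\end{equation*}
For $t<0$ the same argument applied with $s$ and $u$ replaced by $-s$ and $-u$ produces the bounds with $|t|$ and $t^2/2$, so the statement holds for all real $t$.

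The only point that deserves care is the pointwise contraction bound $\|e^{-s\tau_\eps^{-1}}\| \leq 1$: Assumption~\ref{a:2} only delivers the weaker inequality $\|e^{-s\tau_\eps^{-1}}\| \leq C\,e^{-s\eps^{1-\delta}/C}$. In the symmetric positive setting that the paper presents as its motivating case, the unit bound is a direct consequence of the spectral theorem applied to $-\tau_\eps^{-1}$. In the general (non-symmetric) case, one interprets the ``$\leq$'' on the right-hand sides of the lemma modulo the generic constant $C$ introduced in Section~\ref{sec:asympt}, so that $\|m'(t)\| \leq C t$ and $\|m(t)\| \leq C t^2/2$; this is the only step where any non-trivial structure of $\tau_\eps$ is invoked, and it is the sole obstacle to a completely mechanical proof.
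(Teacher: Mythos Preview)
Your argument is essentially identical to the paper's: both compute $m''(t)=e^{-t\tau_\eps^{-1}}$, bound it by~$1$, and integrate twice to obtain the two inequalities; the paper also glosses over the contraction bound $\|e^{-t\tau_\eps^{-1}}\|\leq 1$ (it cites Assumption~\ref{a:1}, which does not literally give this), so your caveat about needing the symmetric positive case or an implicit constant~$C$ is well taken. One small correction: your sentence about $t<0$ is not right, since for negative arguments the integrand becomes $e^{u\tau_\eps^{-1}}$ with $u>0$, which is not contractive; the lemma is in fact only used for $t\geq 0$ in the paper, and the phrase ``for all $t\in\R$'' in the statement should be read as $t\geq 0$.
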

\begin{proof}
We have $m(0)=m'(0)=0$ and $m''(t) = \e^{-t \taueps^{-1}}$. By Assumption \ref{a:1} on $\taueps$, we have
\[
\norm{m''(t)} \leq 1,
\]
so that
\[
\norm{m'(t)} = \norm{\int_0^t m''(s) \, ds} \leq \int_0^t \, ds=t,
\]
as well as
\[
\norm{m(t)} = \norm{\int_0^t m'(s) \, ds} \leq \int_0^t s\, ds =\dfrac{t^2}{2}.
\]
\end{proof}
\begin{lem}\label{lem:DTasymp} Let us denote ($\nabla^2$ denotes the Hessian) as
\begin{equation}\label{eq:nonlin_signal}
c_S = \norm{\nabla^{2}  S}_{\infty},
\end{equation}
as well as the error term due to non-linearities of the problem as (recall that by assumption $\delta < 1/k$)
\begin{equation}
  \label{eq:nl_err}
  \nl(\eps) = c_{\lambda} \eps^{ k \delta} + c_{F} \eps^{1+\delta} + c_S \eps^{2} < \bigo( \eps ),
\end{equation}
where the constants are defined in resp.~equations \eqref{eq:lin_rate}, \eqref{eq:tau_eps}, and \eqref{eq:nonlin_signal}. 
The difference between two jump times, 
\[
\Delta T_{n+1} := T_{n+1}  - T_{n}, 
\]
satisfies the equation
\begin{align}\label{eq:DT1}
\begin{split}
   \theta_{n+1}&=\lambda_0 \Delta T_{n+1} - b^T m'(\Delta T_{n+1}) Z_{T_n}  \\
& \quad - \eps b^T m(\Delta T_{n+1}) \nabla S( X_{T_n}) \Vj_n + \pare{\theta_{n+1}^3+ \theta_{n+1}}\bigo \pare{ \nl(\eps) }.
\end{split}                                                                                                                                                                                       
\end{align}
\end{lem}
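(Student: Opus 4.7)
The plan is to derive the identity by integrating the defining equation $\theta_{n+1} = \int_{T_n}^{T_{n+1}} \lambda(Z_t)\, dt$ and expanding every factor to the order dictated by $\nl(\eps)$. Inserting the expansion \eqref{eq:lin_rate} of $\lambda$ and invoking the standing uniform bound $\sup_t |Z_t| = \bigo(\eps^\delta)$ (which follows from Assumption~\ref{ass:ODE} and the initial hypothesis $|Z_0| = \bigo(\eps^\delta)$), one obtains
\[
\theta_{n+1} = \lambda_0 \Delta T_{n+1} - b^T \int_{T_n}^{T_{n+1}} Z_t\, dt + c_\lambda \Delta T_{n+1}\, \bigo(\eps^{k\delta}),
\]
so the whole task reduces to computing the time-integral of $Z_t$ in closed enough form to reveal the functions $m$ and $m'$ of Definition~\ref{def:m}.

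On the inter-jump interval the velocity is frozen at $\Vj_n$, and combining $\dot Z_t = \eps \nabla S(X_t)\Vj_n - \Feps(Y_t, S(X_t))$ with Assumption~\ref{a:1} yields the linear ODE with nonlinear remainder
\[
\dot{Z}_{T_n+t'} = -\taueps^{-1} Z_{T_n+t'} + \eps \nabla S(X_{T_n+t'})\Vj_n - \eps^{1-\delta} c_F\, \bigo(|Z_{T_n+t'}|^2).
\]
Along the straight line $X_{T_n+s} = X_{T_n} + \eps s \Vj_n$, I would Taylor-expand $\nabla S(X_{T_n+s}) = \nabla S(X_{T_n}) + \bigo(\eps s\, c_S)$ and apply Duhamel's formula to get
\[
Z_{T_n+t'} = \e^{-t'\taueps^{-1}} Z_{T_n} + \eps\, \taueps\bigl(\Id - \e^{-t'\taueps^{-1}}\bigr) \nabla S(X_{T_n})\Vj_n + \bigo\bigl(c_S \eps^2 (t')^2 + c_F \eps^{1+\delta} t'\bigr),
\]
where Assumption~\ref{a:2} ensures boundedness of the relevant exponential factors. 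Integrating $t'$ from $0$ to $\Delta T_{n+1}$ and identifying $\int_0^{\Delta T_{n+1}} \e^{-t'\taueps^{-1}}\, dt' = m'(\Delta T_{n+1})$ and $\int_0^{\Delta T_{n+1}} \taueps(\Id - \e^{-t'\taueps^{-1}})\, dt' = m(\Delta T_{n+1})$ via Definition~\ref{def:m} produces the main identity, modulo errors of order $c_\lambda \Delta T_{n+1} \eps^{k\delta}$, $c_S (\Delta T_{n+1})^3 \eps^2$, and $c_F (\Delta T_{n+1})^2 \eps^{1+\delta}$.

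Finally, the lower bound $\lambda \geq \lambda_{\min} > 0$ from \eqref{eq:ratebound} forces $\Delta T_{n+1} \leq \theta_{n+1}/\lambda_{\min}$, so powers of $\Delta T_{n+1}$ can be replaced by the same powers of $\theta_{n+1}$, and the elementary inequality $\theta_{n+1}^2 \leq \theta_{n+1} + \theta_{n+1}^3$ (valid for $\theta_{n+1}\geq 0$, since $\theta^2-\theta+1 > 0$) collapses all three contributions into $(\theta_{n+1}^3 + \theta_{n+1})\bigo(\nl(\eps))$. The main delicacy is the simultaneous bookkeeping of the three independent nonlinearities of $\lambda$, $\Feps$, and $S$, which is exactly what the definition \eqref{eq:nl_err} of $\nl(\eps)$ encodes; the cubic factor $\theta_{n+1}^3$ in the remainder arises specifically from the two successive time integrations of the $c_S$ Taylor remainder in $\nabla S$.
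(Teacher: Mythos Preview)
Your proposal is correct and follows essentially the same route as the paper: expand $\lambda$ via \eqref{eq:lin_rate}, apply Duhamel's formula to the linearized $Z$-equation with the Taylor remainder for $\nabla S$ and the nonlinear remainder for $\Feps$, integrate once more to recognize $m'(\Delta T_{n+1})$ and $m(\Delta T_{n+1})$, and convert powers of $\Delta T_{n+1}$ into powers of $\theta_{n+1}$ via the rate lower bound. The only cosmetic difference is that you track the $c_S$ and $c_F$ errors with separate powers of $t'$, whereas the paper bounds $t'-T_n$ by $C\theta_{n+1}$ immediately and carries the common factor $(\theta_{n+1}^2+\theta_{n+1})$; your explicit use of $\theta^2 \le \theta + \theta^3$ to reach the final form is left implicit in the paper.
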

\begin{proof} 
Throughout the proof, we will use the estimate $\abs{\Delta T_{n+1} } \leq C \theta_{n+1}$, without always mentioning explicitly.
Duhamel's integration of the ODE in~\eqref{eq:process_noscale} on $[T_n,T_{n+1}]$ yields
  \begin{align}\label{eq:Duhamel_lin_2}
    Z_t = & \e^{- \pare{t-T_n}{\taueps^{-1}}} Z_{T_n} + \eps \int_{T_n}^{t} \e^{- \pare{t-t'}{\taueps^{-1}}} \nabla S(X_{t'})  V_{t'} \d t' \nonumber  \\
& + \int_{T_n}^{t} \e^{- \pare{t-t'}{\taueps^{-1}}} \pare{\taueps^{-1}Z_{t'} - \Feps(Y_{t'},S(X_{t'})} \, d t' .
  \end{align}
Since $X_{t'}=X_{T_n}+\eps(t'-T_n)\Vj_n$, with $t' \leq T_{n+1}$, we get,
\[
\abs{ \nabla S(X_{t'}) - \nabla S(X_{T_n}) } = \theta_{n+1} c_S \bigo(\eps ),
\]
using $c_S$ as defined in \eqref{eq:nonlin_signal} and the estimate $t'-T_n\le \Delta T_{n+1}\le C\theta_{n+1}$.
Moreover, Assumption~\ref{a:1}, combined with Assumption~\ref{ass:ODE}, ensures that
\[
\abs{ \taueps^{-1}Z_{t'} - \Feps(Y_{t'},S(X_{t'}) } = \eps^{1-\delta}c_F\bigo(\abs{Z_{t'}}^{2\delta}) = c_F\bigo(\eps^{1+\delta} ).
\]
 
Inserting these two estimates in~\eqref{eq:Duhamel_lin_2}, yields  \begin{align}\label{eq:key_Duhamel}
  Z_t &= \e^{- \pare{t-T_n}{\taueps^{-1}}} Z_{T_n} + \eps \int_{T_n}^{t} \e^{- \pare{t-t'}{\taueps^{-1}}} \d t' \nabla S(X_{T_n})  \Vj_{T_n} + (\theta_{n+1}^2+ \theta_{n+1})\bigo \pare{ c_{F} \eps^{1+\delta} + c_S \eps^{2} } \nonumber \\
&= \e^{- \pare{t-T_n}{\taueps^{-1}}} Z_{T_n} + \eps \pare{Id - \e^{- \pare{t-T_n}{\taueps^{-1}}} }{\taueps} \nabla S(X_{T_n})  \Vj_{T_n} + (\theta_{n+1}^2+ \theta_{n+1})\bigo \pare{ c_{F} \eps^{1+\delta} + c_S \eps^{2} }.
\end{align}
Integrating again, we get
\begin{align*}
  \begin{split}
   \int_{T_n}^{T_{n+1}} Z_t \d t &= \pare{Id - \e^{- \pare{T_{n+1}-T_n}{\taueps^{-1}}} }{\taueps} \pare{Z_{T_n} -\eps\taueps \nabla S(X_{T_n}) \Vj_{T_n} } \\
& \quad +\eps (T_{n+1}-T_n)\taueps \nabla S(X_{T_n})  \Vj_{T_n} + (\theta_{n+1}^3+\theta_{n+1}^2) \bigo \pare{c_{F} \eps^{1+\delta} + c_S \eps^{2} }  \\
&= m'(\Delta T _{n+1})Z_{T_n} +
 \eps m(\Delta T _{n+1})\nabla S(X_{T_n}) \Vj_{T_n} + (\theta_{n+1}^3+\theta_{n+1}^2) \bigo \pare{c_{F} \eps^{1+\delta} + c_S \eps^{2} },
\end{split}
\end{align*}
with $m(t)$ and $m'(t)$ defined as in Definition~\ref{def:m}.
Finally, \eqref{eq:lin_rate} gives
\[
\theta_{n+1} = \lambda_0 \Delta T_{n+1} - b^T  \int_{T_n}^{T_{n+1}} Z_t dt +\theta_{n+1} c_\lambda \bigo \pare{\eps^{k\delta}},
\]
and the result~\eqref{eq:DT1} follows.
\end{proof}

The following estimate of jump times then follows~:
\begin{lem}\label{lem:DTestim}
The jump time variations can be written in the following form~:
\begin{equation}\label{eq:DTestim}
  \Delta T_{n+1} = \Delta T_{n+1}^0 + \eps \Delta T_{n+1}^1 + (\theta_{n+1}^6+\theta_{n+1})\bigo \pare{\nl(\eps) + \eps^2},
\end{equation}
where 
\begin{align}
\Delta T_{n+1}^0 &= \frac{\theta_{n+1}}{\lambda_0} + \frac{b^T}{\lambda_0} m'(\Delta T_{n+1}^0) Z_{T_n},  \label{eq:estimDT0} \\   
 \Delta T_{n+1}^1 &= \frac{1}{\lambda_0 - b^T \e^{- {\Delta T_{n+1}^0}{\taueps^{-1}}} Z_{T_n} } b^T m(\Delta T_{n+1}^0 ) \nabla  S(X_{T_n})  \Vj_{n}  \label{eq:estimDT1}, 
\end{align}
or, for short,
\begin{align}
\Delta T_{n+1}^0 &= \frac{\theta_{n+1}}{\lambda_0} + \theta_{n+1} \bigo(\eps^{\delta}) \label{eq:estimDT0_bis} \\
\Delta T_{n+1}^1 & = \frac{1}{\lambda_0}   b^T m\left(\frac{\theta_{n+1}}{\lambda_0}  \right) \nabla S(X_{T_n}) \Vj_{n} +  \theta_{n+1}^2\bigo (\eps^{\delta}),\label{eq:estimDT1_bis}
\end{align}
where the term of order $\eps^\delta$ in~\eqref{eq:estimDT0_bis} is independent of $\Vj_n$.
\end{lem}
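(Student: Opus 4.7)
The starting point is the implicit equation~\eqref{eq:DT1} of Lemma~\ref{lem:DTasymp}, which expresses $\theta_{n+1}$ as an $\eps$-perturbation of a relation in $\Delta T_{n+1}$ and $Z_{T_n}$. The plan is a two-term asymptotic expansion in $\eps$: first define $\Delta T_{n+1}^0$ by dropping the explicit $\eps$-order term, then set $\Delta T_{n+1} = \Delta T_{n+1}^0 + \eps \Delta T_{n+1}^1 + R$ and match orders in the resulting expansion to characterize $\Delta T_{n+1}^1$ and control $R$.

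For $\Delta T_{n+1}^0$, I view \eqref{eq:estimDT0} as a fixed-point equation $t = \theta_{n+1}/\lambda_0 + (b^T/\lambda_0)\, m'(t)\, Z_{T_n}$. The bound $\|m'(t)\| \leq t$ combined with $|Z_{T_n}| = \bigo(\eps^\delta)$ (the standing assumption on $Z_0$ together with Assumption~\ref{ass:ODE}) makes this map a contraction on a ball of radius $\bigo(\theta_{n+1})$ centered at $\theta_{n+1}/\lambda_0$ for $\eps$ small enough, giving a unique solution that satisfies the short form~\eqref{eq:estimDT0_bis}. Since the right-hand side of this fixed-point equation does not involve $\Vj_n$, neither does $\Delta T_{n+1}^0$, which proves the final claim of the lemma.

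Next, I substitute the ansatz $\Delta T_{n+1} = \Delta T_{n+1}^0 + \eps \Delta T_{n+1}^1 + R$ into \eqref{eq:DT1} and Taylor-expand $m'(\Delta T_{n+1})$ and $m(\Delta T_{n+1})$ around $\Delta T_{n+1}^0$, using $m''(t) = \e^{-t \taueps^{-1}}$. Subtracting the defining equation for $\Delta T_{n+1}^0$ and collecting terms of order $\eps$ yields the linear equation
\[
\pare{\lambda_0 - b^T \e^{-\Delta T_{n+1}^0 \taueps^{-1}} Z_{T_n}} \Delta T_{n+1}^1 = b^T m(\Delta T_{n+1}^0) \nabla S(X_{T_n}) \Vj_n,
\]
which is~\eqref{eq:estimDT1}; invertibility of the scalar prefactor for small $\eps$ follows from $|Z_{T_n}| = \bigo(\eps^\delta)$. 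The short form~\eqref{eq:estimDT1_bis} is then obtained by Taylor-expanding $m$ around $\theta_{n+1}/\lambda_0$ via $\|m'(t)\| \leq t$, together with the short form of $\Delta T_{n+1}^0$ and a Neumann expansion of the prefactor.

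The main technical difficulty lies in bounding the remainder $R$ by the claimed $(\theta_{n+1}^6 + \theta_{n+1}) \bigo(\nl(\eps) + \eps^2)$. This requires the second-order Taylor corrections for $m'$ and $m$, which involve $m''(t) = \e^{-t\taueps^{-1}}$ and $m'''(t) = -\taueps^{-1} \e^{-t\taueps^{-1}}$: Assumption~\ref{a:2} provides a uniform bound on $\|\e^{-t\taueps^{-1}}\|$, Assumption~\ref{a:3} controls $\|\taueps^{-1}\e^{-t\taueps^{-1}}\|$, and Lemma~\ref{lem:techbound} keeps the dangerous product $\taueps^{-1}\e^{-t\taueps^{-1}} Z_{T_n}$ bounded even as $Z_{T_n}$ may carry compensating factors. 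The high power $\theta_{n+1}^6$ reflects that $\|m(\Delta T_{n+1}^0)\| \leq C\theta_{n+1}^2$, so $\Delta T_{n+1}^1$ is itself of size $\theta_{n+1}^2$; combining the quadratic Taylor corrections of size $(\eps\Delta T_{n+1}^1)^2$ with the $\theta_{n+1}^3 \bigo(\nl(\eps))$ remainder already present in~\eqref{eq:DT1} is what produces powers of $\theta_{n+1}$ up to~$6$ in the final estimate.
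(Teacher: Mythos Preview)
Your outline matches the paper's strategy quite closely: the paper also recasts \eqref{eq:DT1} as $\theta_{n+1}=\Psi_0(\Delta T_{n+1})+\eps\Psi_1(\Delta T_{n+1})+\ldots$ with $\Psi_0(t)=\lambda_0 t-b^T m'(t)Z_{T_n}$ and $\Psi_1(t)=-b^T m(t)\nabla S(X_{T_n})\Vj_n$, defines $\Delta T_{n+1}^0$ implicitly via $\Psi_0$, defines $\Delta T_{n+1}^1=-\Psi_1(\Delta T_{n+1}^0)/\Psi_0'(\Delta T_{n+1}^0)$, and then Taylor-expands to obtain an equation for the remainder $R_{n+1}$. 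Your identification of the key estimates (the bound $\abs{\Psi_0''(t)}=\lambda_0\abs{b^T\taueps^{-1}\e^{-t\taueps^{-1}}Z_{T_n}}\le C$ via Lemma~\ref{lem:techbound}, and $\abs{\Delta T_{n+1}^1}\le C\theta_{n+1}^2$ via $\norm{m(t)}\le t^2/2$) is exactly what the paper uses.

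One point you should make explicit, however, is the \emph{bootstrap} on $R_{n+1}$. In the Taylor-expanded equation the terms $\eps\,\Delta T_{n+1}^1 R_{n+1}\,\Psi_0''(\cdot)$ and $\eps R_{n+1}\Psi_1'(\cdot)$ carry an $\eps$ factor in front of $R_{n+1}$; starting from the crude bound $\abs{R_{n+1}}\le C(\theta_{n+1}+\eps\theta_{n+1}^2)$, the first insertion yields only $\abs{R_{n+1}}\le C\eps(\theta_{n+1}^4+\theta_{n+1}^2)+(\theta_{n+1}^3+\theta_{n+1})\bigo(\nl(\eps))$, and it is the \emph{second} insertion of this improved bound that produces the claimed $\eps^2$ order. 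Relatedly, your explanation of where $\theta_{n+1}^6$ comes from is slightly off: the quadratic term $(\eps\Delta T_{n+1}^1)^2\Psi_0''$ contributes only $\eps^2\theta_{n+1}^4$; the power $\theta_{n+1}^6$ appears in the second bootstrap pass through the cross terms $\eps\,\Delta T_{n+1}^1 R_{n+1}\,\Psi_0''$ and $\eps R_{n+1}\Psi_1'$, once one uses the first-pass bound $\abs{R_{n+1}}\le C\eps\theta_{n+1}^4$.
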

\begin{proof}
 Let us first introduce the functions
\[
\begin{cases}
 \Psi_0(t) = \lambda_0 t - b^T m'(t) Z_{T_n} \\
 \Psi_1(t) =  - b^T m(t)  \nabla S( X_{T_n}) \Vj_n,
\end{cases}
\]
Using these functions, we can rewrite the result \eqref{eq:DT1} of Lemma~\ref{lem:DTasymp}, as
\begin{equation}
  \label{eq:DT1bis}
  \theta_{n+1} =  \Psi_0(\Delta T_{n+1}) + \eps \Psi_1( \Delta T_{n+1} ) + \pare{\theta_{n+1}^3+\theta_{n+1}}\bigo \pare{ \nl(\eps) }.
\end{equation}
Also, the definitions~\eqref{eq:estimDT0}-\eqref{eq:estimDT1} of $(\Delta T_{n+1}^0,\Delta T_{n+1}^1)$ can be written as
\begin{equation}\label{eq:psi_DT}
  \begin{cases}
    \Psi_0(\Delta T_{n+1}^0) = \theta_{n+1} \\
     \Delta T_{n+1}^1 = -\dfrac{\Psi_1(\Delta T_{n+1}^0)}{\Psi_0'(\Delta T_{n+1}^0)}.
  \end{cases}
\end{equation}
Taking the first line, add $\epsilon$ times the second line, and rearrange, one has 
\begin{equation}\label{eq:why_this}
\theta_{n+1} = \Psi_0(\Delta T_{n+1}^0) + \eps \Psi_0'(\Delta T_{n+1}^0) \Delta T_{n+1}^1 + \eps \Psi_1(\Delta T_{n+1}^0).
\end{equation}
We then consider the expansion
\[
\Delta T_{n+1} = \Delta T_{n+1}^0 + \eps \Delta T_{n+1}^1 + R_{n+1}.
\]
The proof of the Lemma then reduces to showing that $R_{n+1}$ is small. Anticipating the result, the idea is to Taylor expand $\Psi_0(\Delta T_{n+1})$ and $\Psi_1(\Delta T_{n+1})$, use these Taylor expansions in~\eqref{eq:psi_DT}, and compare to~\eqref{eq:why_this}. We start by using a double Taylor expansion at order~$1$ for $\Psi_0(\Delta T_{n+1})$~:
\begin{align*}
  \Psi_0(\Delta T_{n+1}) =& \Psi_0(\Delta T_{n+1}^0+R_{n+1})+ \eps \Delta T_{n+1}^1 \Psi_0'(\Delta T_{n+1}^0+R_{n+1}) \\
& + \frac{(\eps \Delta T_{n+1}^1)^{2}}{2}  \Psi_0''(\Delta T_{n+1}^0 +   R_{n+1} + \gamma_3\eps \Delta T_{n+1}^1 ), \\
=& \Psi_0(\Delta T_{n+1}^0)+ R_{n+1} \Psi_0'(\Delta T_{n+1}^0 + \gamma_1 R_{n+1}) + \eps \Delta T_{n+1}^1\Psi_0'(\Delta T_{n+1}^0) 
 \\
&+ \eps \Delta T_{n+1}^1 R_{n+1}  \Psi_0''(\Delta T_{n+1}^0 + \gamma_2  R_{n+1}  ) + \frac{(\eps \Delta T_{n+1}^1)^{2}}{2}  \Psi_0''(\Delta T_{n+1}^0 +   R_{n+1} + \gamma_3 \eps \Delta T_{n+1}^1 ) ,
\end{align*}
 and a simple Taylor expansion at order~$0$ for $\Psi_1(\Delta T_{n+1})$:
\[
\Psi_1(\Delta T_{n+1}) = \Psi_1(\Delta T_{n+1}^0) + (\eps \Delta T_{n+1}^1 + R_{n+1})\Psi_1'(\Delta T_{n+1}^0+ \gamma_{4} (\eps \Delta T_{n+1}^1 + R_{n+1})) ,
\]
where $\gamma_i\in [0,1], i = 1 \dots 4$ are used to apply the intermediate value theorem to the rest of the Taylor expansions. 
Inserting the Taylor expansions in the identity~\eqref{eq:DT1bis}, and comparing to~\eqref{eq:why_this} simplifies to
\begin{align}
  \label{eq:DT1ter}
  0 = & \Psi_0'(\Delta T_{n+1}^0 + \gamma_1 R_{n+1} )R_{n+1}  \nonumber \\
& \quad + \eps \Delta T_{n+1}^1 R_{n+1}   \Psi_0''(\Delta T_{n+1}^0 + \gamma_2  R_{n+1} ) + \frac{\pare{\eps \Delta T_{n+1}^1}^2}{2} \Psi_0''(\Delta T_{n+1}^0 +   R_{n+1} + \gamma_3 \eps \Delta T_{n+1}^1 )   \nonumber \\
& \quad +  (\eps^{2} \Delta T_{n+1}^1 + \eps R_{n+1}) \Psi_1'(\Delta T_{n+1}^0+ \gamma_{4} (\eps \Delta T_{n+1}^1 + R_{n+1}) )+\pare{\theta_{n+1}^3+\theta_{n+1}}\bigo \pare{\nl(\eps)},
\end{align}

We now need some estimates on the terms involved in~\eqref{eq:DT1ter} to control~$R_{n+1}$. First of all, we need some crude bounds on jump times. Since rates are bounded above and below by $\lambda_{\rm min}$ and $\lambda_{\rm max}$, we have
\[
\abs{\Delta T_{n+1}} \leq C \theta_{n+1}.
\] 
Then, $\Psi_0'(t) = \lambda_0 - \lambda_0b^T {\rm e}^{-t \taueps^{-1}}Z_{T_n}$, so that, using Assumption~\ref{ass:ODE}, there is a constant $C$ such that
\[
 \frac{1}{2} \lambda_0 \leq \sup_{t \in [0, +\infty[}\Psi_0'(t) \leq \frac{3}{2} \lambda_0,
\]
for $\eps \leq 1/C$. As a consequence, $\Delta T_{n+1}^0$ is well-defined by the implicit definition~\eqref{eq:estimDT0} and one has obviously
\[
\abs{\Delta T_{n+1}^0} \leq C \theta_{n+1}.
\]
In the same way
\[
\abs{\Delta T_{n+1}^1} \leq C m(\Delta T_{n+1}^0) \leq C \theta_{n+1}^2,
\]
and thus by definition:
\[
\abs{R_{n+1}} \leq C ( \theta_{n+1}+\eps\theta_{n+1}^2).
\]
Then we need some bounds on the derivatives of $\Psi_0$ and $\Psi_1$. Using Lemma~\ref{eq:techbound}, we get
\[
\abs{\Psi_0''(t)} = \lambda_0 \abs{b^T \taueps^{-1} {\rm e}^{-t \taueps^{-1}}Z_{T_n} } \leq C.
\]
Eventually, for any $t \geq 0$,
\[
\abs{\Psi_1'(t)} \leq C m'(t) \leq C t^2.
\]
Inserting all the previous estimates in~\eqref{eq:DT1ter} to control $R_{n+1}$, we get
\begin{align*}
    \abs{R_{n+1}} & \leq C \eps ( \theta_{n+1}^4 + \theta_{n+1}^2) + \pare{\theta_{n+1}^3+ \theta_{n+1}} \bigo \pare{\nl(\eps)}.
\end{align*}
Inserting a second time in~\eqref{eq:DT1ter} to control $R_{n+1}$, it yields:
\[
\abs{R_{n+1}} \leq C \eps^2 ( \theta_{n+1}^6 + \theta_{n+1}^2) + \pare{\theta_{n+1}^5+ \theta_{n+1}} \bigo \pare{\nl(\eps)},
\] 
and the result follows.
\end{proof}

\begin{rem}It is crucial to note that the term of order $\eps^\delta$ in~\eqref{eq:estimDT0}-\eqref{eq:estimDT0_bis} is independent of $\Vj_n$. This will ensure that the latter contributes to martingale terms in sums of the form~$\sum_n \Delta T _n \Vj_n$.
\end{rem}

Finally, the drift of the limiting diffusion (see Proposition below) will be computed using the following lemma~:
\begin{lem}\label{lem:av_m}
  Let $\theta$ be an exponential random variable of mean $1$. Then:
  \begin{equation}
    \label{eq:av_m}
    \E \pare{ m\left( \frac{\theta}{\lambda_0} \right)  } = \frac{1}{\lambda_0} \frac{\taueps}{\Id + \lambda_0 \taueps}.
  \end{equation}
\end{lem}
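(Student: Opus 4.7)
The plan is to compute the expectation directly from the definition of $m$ in Definition~\ref{def:m}, reducing the computation to a matrix Laplace transform that can be evaluated explicitly.

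First I would substitute $t = \theta/\lambda_0$ into the identity $m(t) = t\taueps - (\Id - \e^{-t\taueps^{-1}})\taueps^2$ and split the expectation into two pieces. The linear piece is immediate: since $\E[\theta]=1$, we have $\E[(\theta/\lambda_0)\taueps] = \taueps/\lambda_0$. The nontrivial piece is the matrix exponential term, for which I would use the representation
\[
\E\bigl[\e^{-(\theta/\lambda_0)\taueps^{-1}}\bigr] = \int_{0}^{\infty} \e^{-\theta\pare{\Id + \taueps^{-1}/\lambda_0}} \, \d\theta.
\]
The integral converges absolutely thanks to Assumption~\ref{a:2}, which guarantees that the eigenvalues of $\taueps^{-1}$ (hence of $\Id + \taueps^{-1}/\lambda_0$) have positive real part uniformly in a way sufficient to make the improper integral well-defined. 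Evaluating the integral gives $(\Id + \taueps^{-1}/\lambda_0)^{-1}$, which after factoring $\taueps^{-1}/\lambda_0$ from the right inside the parenthesis simplifies to $\lambda_0\taueps(\Id + \lambda_0\taueps)^{-1}$.

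Assembling the pieces, I get
\[
\E[m(\theta/\lambda_0)] = \frac{\taueps}{\lambda_0} - \bigl(\Id - \lambda_0\taueps(\Id+\lambda_0\taueps)^{-1}\bigr)\taueps^2.
\]
The last algebraic step is the identity $\Id - \lambda_0\taueps(\Id+\lambda_0\taueps)^{-1} = (\Id+\lambda_0\taueps)^{-1}$, which follows from writing $\Id = (\Id+\lambda_0\taueps)(\Id+\lambda_0\taueps)^{-1}$. Substituting and factoring $(\Id+\lambda_0\taueps)^{-1}$ on the left yields
\[
\E[m(\theta/\lambda_0)] = (\Id+\lambda_0\taueps)^{-1}\left[\frac{(\Id+\lambda_0\taueps)\taueps}{\lambda_0} - \taueps^2\right] = \frac{1}{\lambda_0}(\Id+\lambda_0\taueps)^{-1}\taueps,
\]
which is the desired formula (all occurring matrices commute with $\taueps$, so the notation $\taueps/(\Id+\lambda_0\taueps)$ is unambiguous).

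The whole argument is essentially a one-line matrix Laplace transform; the only point that requires care is the justification of the improper matrix-valued integral, which is exactly where Assumption~\ref{a:2} is used. I do not anticipate any other obstacle.
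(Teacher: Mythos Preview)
Your proof is correct and follows essentially the same approach as the paper's: both compute the expectation by writing it as an integral against the exponential density, split according to the three terms in the definition of $m$, evaluate the matrix Laplace transform $\int_0^\infty \e^{-\theta(\Id+\taueps^{-1}/\lambda_0)}\,\d\theta = (\Id+\taueps^{-1}/\lambda_0)^{-1}$, and then simplify algebraically. The only minor addition in your version is the explicit reference to Assumption~\ref{a:2} to justify convergence of the matrix integral, which the paper leaves implicit.
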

\begin{proof}
  It is a straightforward computation:
  \begin{align*}
    \E \pare{ m\left( \frac{\theta}{\lambda_0}\right)   }=& \int_0^{+\infty}m(t/\lambda)e^{-t}dt\\ =& \frac{\taueps}{\lambda_0} \int_0^{+\infty}t {\rm e}^{-t}\, dt - \taueps^2\int_0^{+\infty}{\rm e}^{-t}\, dt + \taueps^2 \int_0^{+\infty}{\rm e}^{-(\Id+ \frac{\taueps^{-1}}{\lambda_0}   )t}\, dt \\
 =&\frac{\taueps}{\lambda_0}  - \taueps^2 +\taueps^2 \pare{\Id+ \taueps^{-1}/\lambda_0}^{-1} \\
=&\frac{\taueps}{\Id + \lambda_0 \taueps} \pare{ \frac{1}{\lambda_0}\pare{\Id + \lambda_0 \taueps}\pare{\Id - \lambda_0 \taueps}     + \lambda_0\taueps^2       } \\
=& \frac{1}{\lambda_0} \frac{\taueps}{\Id + \lambda_0 \taueps}.
  \end{align*}
\end{proof}
Using these lemmas, we can prove the following result on the diffusive limit of the process with internal state~:
\begin{pro}\label{pro:convproc}
Consider the process with internal state define in Section~\ref{sec:model-internal}, and the assumptions of Section~\ref{sec:ass_ass}.
Assume $\lim_{\eps \to 0}\dfrac{\taueps}{\lambda_0 \taueps + \Id }$ exists, and that $ \delta > 1/k$. Then, the process $\bt \mapsto X_{\bar{t}}^{\eps} = X_{t/\eps^2} $ converges in distribution (for uniform convergence topology) towards $\bt \mapsto X_{\bar{t}}^{0}$ solution to the SDE~\eqref{eq:cprocess_hydro} with drift coefficient~\eqref{eq:control_field}. As a consequence, the density $n^\eps_{\bar{t}}$ of the process converges weakly towards $n^0_{\bar{t}}= n^{c,0}_{\bar{t}}$ solution of (\ref{eq:cdens_hydro}) with drift coefficient~\eqref{eq:control_field}.
\end{pro}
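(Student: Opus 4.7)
I will follow the four-step scheme of Proposition~\ref{pro:convcoupl}, substituting the explicit jump-time expansion of Lemma~\ref{lem:DTestim} for the simpler Lemma~\ref{lem:dt-control}. First, I introduce the position process reindexed by tumbling count,
\[
\tilde X^{\eps}_{\bt} := X_0 + \sum_{n=0}^{\lfloor \lambda_0 \bt/\eps^{2}\rfloor - 1} \eps \Delta T_{n+1} \Vj_n + r_1,
\]
(with a linear-interpolation remainder $r_1$ analogous to \eqref{eq:sum_X_c}), together with the approximating Markov chain
\[
\Xi_{n+1} = \Xi_n + \eps \Delta T^{0}_{n+1} \Vj_n + \eps^{2}\Delta T^{1}_{n+1}\Vj_n,
\]
built from the decomposition \eqref{eq:DTestim}--\eqref{eq:estimDT1}, and its continuous interpolation $\tilde\Xi^\eps_{\bt}$. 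Because $\Delta T^0_{n+1}$ and $\Delta T^1_{n+1}$ depend on the internal-state deviation $Z_{T_n}$, I track the coupled chain $(\Xi_n, Z_{T_n})$, where $Z_{T_n}$ is propagated by the discrete Duhamel recursion read off from \eqref{eq:key_Duhamel}.

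For Step~(i), tightness of $(\tilde X^{\eps}_{\bt}, \tilde\Xi^{\eps}_{\bt})$ reduces to the Kolmogorov fourth-moment estimate \eqref{eq:Kolmogorov}, obtained exactly as in Proposition~\ref{pro:convcoupl}. The key structural input, emphasized in the Remark following Lemma~\ref{lem:DTestim}, is that the $\bigo(\eps^\delta)$ correction in \eqref{eq:estimDT0_bis} is independent of $\Vj_n$; hence $\sum_n \eps \Delta T^{0}_{n+1}\Vj_n$ is a centered martingale, with quadratic variation $\bigo(\eps^{2} n)$, giving the required $\bigo((\bt-\bs)^{2})$ bound after summing $n \sim \eps^{-2}(\bt-\bs)$ increments. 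The $\eps^{2}\Delta T^{1}_{n+1}\Vj_n$ drift piece and the residual $(\theta_{n+1}^{6}+\theta_{n+1})\bigo(\nl(\eps)+\eps^{2})$ error are controlled using the exponential moments of $\theta_{n+1}$ and the fact that $\nl(\eps) = \smallo(\eps)$ whenever $\delta > 1/k$.

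For Step~(ii), subtracting the two processes and using \eqref{eq:DTestim} with the Lipschitz regularity of $\nabla S$ yields
\[
\E |\tilde X^{\eps}_{\bt} - \tilde\Xi^{\eps}_{\bt}| \leq C\int_0^{\bt} \E |\tilde X^{\eps}_{\bs} - \tilde\Xi^{\eps}_{\bs}| \, d\bs + \bigo(\eps^{\delta}),
\]
and Gronwall gives a common limit. For Step~(iii), a standard martingale CLT for $(\Xi_n)$ produces a limiting SDE whose diffusion matrix is $\lambda_0 \, \E[\theta^{2}/\lambda_0^{2}]\, D = 2D/\lambda_0$, and whose drift comes from averaging the $\eps^{2}\Delta T^{1}_{n+1}\Vj_n$ contribution: Lemma~\ref{lem:av_m} computes $\E[m(\theta_{n+1}/\lambda_0)] = \frac{1}{\lambda_0}\frac{\taueps}{\Id+\lambda_0\taueps}$, and \eqref{eq:maxw} provides $\E[\Vj_n\otimes\Vj_n] = D$, so the drift is exactly $\frac{D A_0(x)}{\lambda_0}$ with $A_0$ given by \eqref{eq:control_field}, recovering \eqref{eq:cprocess_hydro}. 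Step~(iv) is the random time change $\alpha_\eps(\bt)$ defined as the linear interpolation of $(\eps^{2}T_n)$: Lemma~\ref{lem:DTestim} and the law of large numbers give $\alpha_\eps(\bt) \to \bt$ in distribution, after which Lemma~\ref{l:techrandt} transfers convergence from $\tilde X^\eps$ to $X^\eps$.

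The main obstacle is the $\bigo(\eps^\delta)$ (rather than $\bigo(\eps)$) correction of $\Delta T^{0}_{n+1}$ about $\theta_{n+1}/\lambda_0$ arising from the internal-state feedback $Z_{T_n}$. Naively, such a correction would scale as $\eps^{\delta-1} \to \infty$ on the diffusive timescale and ruin the analysis. The saving grace, and the most delicate ingredient, is that this leading correction is independent of the post-jump velocity $\Vj_n$ (as highlighted in the Remark after Lemma~\ref{lem:DTestim}), so it lives entirely on the martingale side and contributes nothing to the macroscopic drift. The control of the remaining remainder terms through the sixth power of $\theta_{n+1}$ is precisely why Assumption~\ref{ass:ODE} and the auxiliary estimate of Lemma~\ref{lem:techbound} are needed.
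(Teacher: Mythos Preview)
Your four-step architecture matches the paper's, and you correctly identify the crucial insight: the $\bigo(\eps^\delta)$ correction in $\Delta T^0_{n+1}$ is independent of $\Vj_n$ and therefore contributes only a vanishing martingale, not a macroscopic drift. However, your choice of comparison chain $\Xi_n$ differs from the paper's in a way that leaves a gap in Step~(iii).

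The paper defines $\Xi_n$ as a \emph{genuine} Markov chain in position only,
\[
\Xi_{n+1} = \Xi_n + \eps\,\frac{\theta_{n+1}}{\lambda_0}\,\Vj_n + \eps^2\,\frac{\Vj_n}{\lambda_0}\, b^T m\!\left(\frac{\theta_{n+1}}{\lambda_0}\right)\nabla S(\Xi_n)\,\Vj_n,
\]
using the simplified forms \eqref{eq:estimDT0_bis}--\eqref{eq:estimDT1_bis} with $\nabla S$ evaluated at $\Xi_n$. All dependence on the internal state $Z_{T_n}$ is pushed into the \emph{difference} $\tilde X^\eps - \tilde\Xi^\eps$, where it appears as the martingale increment $M_n = \Vj_n\, b^T m'(\Delta T^0_{n+1})Z_{T_n}$ with $|M_n| \le C\theta_{n+1}\eps^\delta$; summed in $L^2$ over $\bigo(\eps^{-2})$ steps this gives $\bigo(\eps^\delta)$, which is precisely the extra term in the paper's Gronwall estimate in Step~(ii). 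Step~(iii) is then a direct citation of the standard drifted-random-walk diffusion limit.

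Your $\Xi_n$, by contrast, keeps the full $\Delta T^0_{n+1}$ and $\Delta T^1_{n+1}$ of \eqref{eq:estimDT0}--\eqref{eq:estimDT1}, which depend on $Z_{T_n}$ (and on $X_{T_n}$ rather than $\Xi_n$). This either makes $\Xi_n$ not autonomous at all---if $Z_{T_n}$ is the actual process's internal state, then Step~(ii) becomes trivial but Step~(iii) is circular, since $\Xi_n$ is defined through the very process whose limit you seek---or else, if you carry a separately propagated $Z$-component as you suggest, then $(\Xi_n,Z_n)$ is Markov but with a fast internal component, and Step~(iii) is no longer a ``standard martingale CLT'': the $\bigo(\eps^\delta)$ martingale analysis must be redone there to show the $Z$-dependent piece of $\Delta T^0_{n+1}$ does not perturb the diffusion coefficient or drift. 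That is exactly the work the paper isolates cleanly in Step~(ii) by its simpler choice of $\Xi_n$. Your route is repairable, but as written the appeal to a standard result in Step~(iii) is not justified; the cleanest fix is to adopt the paper's $\Xi_n$ and move the $M_n$ martingale estimate into Step~(ii).
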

\begin{proof}
Since the proof is very similar to the proof of Proposition~\ref{pro:convcoupl}, we will skip similar arguments and refer to the latter whenever possible.
As in the proof of Proposition~\ref{pro:convcoupl}, we perform a random time change on the process of positions of the bacterium, so that a jump (tumble phase) become deterministic, and will occur on regular small time interval of length $\eps^2/\lambda_0$. The latter is thus defined as:
\begin{equation}
  \label{eq:sum_X}
  \tilde{X}^{\eps}_{\bt} 
= X_ 0 + \sum_{n=0}^{\lfloor  \lambda_0 \bt/\eps^2\rfloor - 1} \eps \Delta T_{n+1} \Vj_{n} + \underbrace{\pare{ \lambda_0 \bt/\eps^2 -\lfloor  \lambda_0 \bt/\eps^2\rfloor}\eps \Delta T_{\lfloor  \lambda_0 \bt/\eps^2\rfloor+1} \Vj_{\lfloor  \lambda_0 \bt/\eps^2\rfloor}}_{r_1}.
\end{equation}
To compute the diffusion limit $\tilde{X}^{\eps}_{\bt}$, we will compare it to a classical drifted random walk defined by the Markov chain
\begin{equation}
  \label{eq:Markov_int}
  \begin{cases}
    \Xi_0= X_ 0 \\
\Xi_{n+1} = \Xi_{n} + \eps \frac{\theta_{n+1} \Vj_n}{\lambda_0} + \eps^2 \frac{\Vj_n}{\lambda_0} b^T m(\frac{\theta_{n+1}}{\lambda_0}  ) \nabla S(\Xi_{n}) \Vj_{n}.
  \end{cases}
\end{equation}
To the latter, we can associate a continuous process defined on diffusive time scales by interpolation
\begin{equation}
  \label{eq:sum_Xi}
  \tilde{\Xi}^{\eps}_{\bt} 
= X_ 0 + \pare{ \sum_{n=0}^{\lfloor  \lambda_0 \bt/\eps^2\rfloor - 1} \Xi_{n+1}-\Xi_n} + \underbrace{\pare{ \lambda_0 \bt/\eps^2 -\lfloor  \lambda_0 \bt/\eps^2\rfloor}\pare{ \Xi_{\lfloor  \lambda_0 \bt/\eps^2\rfloor + 1}-\Xi_{\lfloor  \lambda_0 \bt/\eps^2\rfloor } }}_{r_2}.
\end{equation}
Now the computation of the diffusive limit of the sequence of processes $ \bt \mapsto  \tilde{X}^{\eps}_{\bt}$ relies again on four steps. \\

Step (i). Tightness of the coupled sequence $\bt \mapsto (\tilde{X}^{\eps}_{\bt},\tilde{\Xi}^{\eps}_{\bt})$. \\
The proof of tightness is strictly similar to the proof in Proposition~\ref{pro:convcoupl}, so we leave it to the reader.

Step (ii). Comparison of the limits.\\
We can now use a Gronwall argument to show that $\bt \mapsto \tilde{X}^{\eps}_{\bt}$ and $\bt \mapsto \tilde{\Xi}^{\eps}_{\bt}$ have the same limit (in distribution) when $\eps \to 0$. First, since the coupled process is tight, we can consider a new probability space where a $\eps$-sequence converges almost surely (in uniform norm) towards the limiting process $\bt \mapsto (\tilde{X}^{0}_{\bt},\tilde{\Xi}^{0}_{\bt})$. Then, we compare the two processes using~\eqref{eq:DTestim}-\eqref{eq:sum_X} and~\eqref{eq:Markov_int}-\eqref{eq:sum_Xi}. For this purpose, we incorporate the rest terms $r_1$ and $r_2$ in the sum in~\eqref{eq:sum_X} and~\eqref{eq:sum_Xi}, and we get the estimate 
\begin{align*}
&  \abs{\tilde{X}^{\eps}_{\bt} - \tilde{\Xi}^{\eps}_{\bt} } \leq \abs{ \sum_{n=0}^{\lfloor  \lambda_0 \bt/\eps^2\rfloor} \eps \Vj_n\pare{ \Delta T ^0_{n+1} - \frac{\theta_{n+1}}{\lambda_0}} }  \\
&\quad + \sum_{n=0}^{\lfloor  \lambda_0 \bt/\eps^2\rfloor} \eps^2\Vj_n\pare{ \Delta T ^1_{n+1} - \frac{b^T}{\lambda_0} m(\frac{\theta_{n+1}}{\lambda_0}) \nabla S(\Xi_{n})\Vj_n } + \sum_{n=0}^{\lfloor  \lambda_0 \bt/\eps^2\rfloor}  (\theta_{n+1}^6+\theta_{n+1}) \bigo (\eps^{3}+\eps \nl(\eps) ).
\end{align*}
Denoting
\[
M_n = \Vj_n b^T m'(\Delta T_{n+1}^0) Z_{T_n} ,
\]
and using the estimates in Lemma~\ref{lem:DTestim}, we get
\begin{align*}
&  \abs{\tilde{X}^{\eps}_{\bt} - \tilde{\Xi}^{\eps}_{\bt}} \leq \abs{\sum_{n=0}^{\lfloor  \lambda_0 \bt/\eps^2\rfloor} \eps M_n} \\
&\quad + \sum_{n=0}^{\lfloor  \lambda_0 \bt/\eps^2\rfloor} \eps^2\frac{\theta_{n+1}}{\lambda_0} \abs{b^T m(\frac{\theta_{n+1}}{\lambda_0})}\abs{  \nabla S(X_{T^c_n})-\nabla S(\Xi_{n}) }  + \sum_{n=0}^{\lfloor  \lambda_0 \bt/\eps^2\rfloor}  (\theta_{n+1}^6+\theta_{n+1}) \bigo (\eps^{3} + \eps^{2+\delta}+\eps \nl(\eps) ) .
\end{align*}
Taking expectation and using Jensen's inequality, as well as the fact that $S$ is Lipschitz, then gives 
\begin{align*}
&  \E\abs{\tilde{X}^{\eps}_{\bt} - \tilde{\Xi}^{\eps}_{\bt}} \leq \pare{ \E \abs{\sum_{n=0}^{\lfloor  \lambda_0 \bt/\eps^2\rfloor} \eps M_n}^2}^{1/2} \\
&\quad + C\eps^2 \sum_{n=0}^{\lfloor  \lambda_0 \bt/\eps^2\rfloor} \E\abs{  X_{T^c_n}- \Xi_{n} }  + \bigo (\eps+\eps^{\delta}+ \eps^{-1} \nl(\eps) ) .
\end{align*}
Using $\abs{M_n}\leq C \theta_{n+1} \eps^{\delta}$ (which follows from Assumption~\ref{ass:ODE}) and the independence of the random variables $(\Vj_n)_{n \geq 0}$, we get
\[
\E \abs{\sum_{n=0}^{\lfloor  \lambda_0 \bt/\eps^2\rfloor} \eps M_n} ^2 \leq C \lfloor  \lambda_0 \bt/\eps^2\rfloor \times \eps^2 \times \eps^{2\delta} = \bigo( \eps^{2\delta} ).
\]
Controlling the derivative of both processes on the time interval $[n \eps^2/\lambda_0,(n+1) \eps^2/\lambda_0]$ yields
\[
\abs{  \tilde{X}_{T^c_n}-\Xi_{n} } \leq \lambda_0\eps^{-2} \int_{n \eps^2/\lambda_0}^{(n+1) \eps^2/\lambda_0}  \abs{\tilde{X}^{\eps}_{\bs} - \tilde{\Xi}^{\eps}_{\bs} } \, d \bs + \theta_{n+1}\bigo(\eps).
\]
In the end, we get
\begin{equation*}
 \E  \abs{\tilde{X}^{\eps}_{\bt} - \tilde{\Xi}^{\eps}_{\bt}} \leq C \int_0^{\bt}\E \abs{\tilde{X}^{\eps}_{\bs} - \tilde{\Xi}^{\eps}_{\bs}}\, d \bs  + \bigo (\eps+\eps^{\delta}+ \eps^{-1} \nl(\eps) )
\end{equation*}
and applying a Gronwall argument yields (expliciting $\nl(\eps)$):
\begin{equation}\label{eq:final_bound}
 \E  \abs{\tilde{X}^{\eps}_{\bt} - \tilde{\Xi}^{\eps}_{\bt}} = \bigo( (1+c_{S})\eps + (1+ c_{F})\eps^{\delta} + c_{\lambda} \eps^{ k \delta - 1} ).
\end{equation}
Taking the limit $\eps \to 0$ shows that $\tilde{X}^{0}_{\bt} = \tilde{\Xi}^{0}_{\bt}$ almost surely.

Step (iii). Computation of the limit.\\
Standard results (see \cite{StrVarSri06}) show that the sequence of processes $\bt \mapsto \tilde{\Xi}^{\eps}_{\bt}$ satisfying~\eqref{eq:Markov_int} converges in distribution for the uniform norm towards the process that is solution of the SDE with diffusion matrix~:
\[
a = \lambda_0 \E\pare{ \frac{\theta_{n+1}^2\Vj_n \otimes \Vj_n}{\lambda_0^2}  } = 2 D/\lambda_0,
\]
and drift vector field, using~\eqref{eq:av_m}, see Lemma~\ref{lem:av_m}~:
\begin{align*}
 \dps b(x) =&  \lim_{\eps \to 0}\lambda_0 \E\pare{ \frac{\Vj_n}{\lambda_0} b^T m(\frac{\theta_{n+1}}{\lambda_0}  ) \nabla S(x) \Vj_{n} } \\
  =&  \lim_{\eps \to 0} D \pare{b^T \frac{\taueps}{\Id + \lambda_0 \taueps} \nabla S(x) } / \lambda_0 , \\
 & \xrightarrow{\eps \to 0} D A(x)  / \lambda_0,
\end{align*}
with $A(x)$ defined in equation~\ref{eq:control_field}, yielding the SDE~\eqref{eq:cprocess_hydro}. By Step (ii), the same holds true for~$\bt \mapsto \tilde{X}^{\eps}_{\bt}$.

Step (iv). Random time change.\\
It remains to show that $\bt \to \tilde{X}^{c,\eps}_{\bt}$ and $\bt \to X^{c,\eps}_{\bt} $ converge to the same limit. We introduce the random time change
\begin{align}\label{eq:timechange}
   \alpha_{\eps}(\bt) & =  \eps^2 T_{\lfloor  \lambda_0 \bt/\eps^2\rfloor} + \eps^2 \pare{T_{\lfloor  \lambda_0 \bt/\eps^2\rfloor +1}  - T_{\lfloor  \lambda_0 \bt/\eps^2\rfloor}}\pare{ \lambda_0 \bt/\eps^2 - \lfloor  \lambda_0 \bt/\eps^2\rfloor } \\
& = \sum_{n=1}^{\lfloor  \lambda_0 \bt/\eps^2\rfloor} \eps^2 \Delta T_{n}  +  \eps^2\pare{\Delta T_{\lfloor  \lambda_0 \bt/\eps^2\rfloor +1}  }\pare{ \lambda_0 \bt/\eps^2 - \lfloor  \lambda_0 \bt/\eps^2\rfloor },
\end{align}
which is the linear interpolation of the jump times $\pare{T_n}_{n\geq 0}$, and by construction is almost surely strictly increasing and continuous. Lemma~\ref{lem:DTestim} implies that $\Delta T_{n}=\frac{\theta_{n+1}}{\lambda_0}+ (\theta_{n+1}^6+\theta_{n+1}) \bigo \pare{\eps + \eps^{1+\delta} + \nl(\eps)} )$.
Therefore, we get
\[
 \alpha_{\eps}(\bt) = \frac{\eps^2}{\lambda_0}\sum_{n=1}^{\lfloor  \lambda_0 \bt/\eps^2\rfloor} \theta_{n}   + \frac{\bigo (\eps^{2 + \delta} + \eps^{3}+ \eps^2\nl(\eps))}{\lambda_0}\sum_{n=1}^{\lfloor  \lambda_0 \bt/\eps^2\rfloor}  (\theta_{n}^6 +\theta_{n+1}),
\]
so that $\bt \mapsto \alpha_{c,\eps}(\bt)$ converges in distribution on any time interval $[0,T]$ for the uniform convergence topology towards the deterministic function $\bt \mapsto \bt$ (see, e.g., \cite{Kus84}). By construction,
\begin{align}\label{eq:alphatilde}
X^{\eps}_{\alpha_{\eps}(\bt)} &= \tilde{X}^{\eps}_{\bt},
\end{align}
so that we can apply the technical Lemma~\ref{l:techrandt} to conclude.
\end{proof}
 
\begin{rem}
In one spatial dimension and with internal dynamics given by \eqref{e:scalar-y},
Erban and Othmer argued on formal arguments (a moment expansion) that the 
evolution of the density $n(x,t)$ on hyperbolic time-scales satisfies
\begin{equation}\label{eq:process_hydro}
\partial^2_t n + \lambda_0 \partial_t n = \partial_x \pare{\epsilon^2 \partial_x n - \frac{ b \epsilon^2 \taueps}{1 +  \lambda_0 \taueps}S'(x)n
},
\end{equation}
in the limit when $\epsilon \to 0$ \cite{ErbOthm04}.  
It is easy to see that equation \eqref{eq:process_hydro} is equivalent to \eqref{eq:cprocess_hydro} in this limit when choosing $A(x)$ in \eqref{eq:cprocess_hydro} according to \eqref{eq:control_field}.
\end{rem}

\begin{rem}
	Another relation between the process with internal dynamics and the process with direct gradient sensing is obtained when, for fixed $\epsilon$,  the bacteria learn the value of the chemoattractants concentrations infinitely fast. To investigate this limit, we may construct a sequence $(\taueps^k)_{k=1}^{\infty}$, such that $\lim_{k\to\infty}\|\taueps^k\|^{-1} = \infty$, and show that, for $k\to\infty$, the process with internal state converges to a process with direct gradient sensing, with jump rate given by $\lambda^c_\epsilon(x,v)=\lambda_\epsilon(\nabla S(x)v)$, i.e.\ the turning rate for the process with internal dynamics, evaluated for the deviation $z=\nabla S(x)v$. 
\end{rem}

\section{Discretization of velocity-jump processes\label{sec:discr}}

To observe the asymptotic behaviour in numerical simulations, the time discretization should approach the same diffusion limit as the time-continuous problem. This requires, in particular, that the discretized jump times are computed sufficiently accurately.  In this section, we propose a time discretization of \eqref{eq:process_noscale} and give a consistency result. The process with direct gradient sensing \eqref{eq:cprocess} can be discretized similarly.  For ease of exposition, we consider the scalar equation \eqref{e:scalar-y} for the internal state; generalization to nonlinear systems of equations is suggested in Remark~\ref{rem:nonlin_num}.
  
\subsection{Linear turning rate}

First, assume a linearized jump rate 
\begin{equation}\label{eq:jump-lin}
\lambda(z) = \lambda_0  - b^T  z.
\end{equation}
When the chemoattractant profile is linear, $S(x)=S_0+(\nabla S_0)\cdot x$, the ODE \eqref{e:scalar-y} can be explicitly solved for $Z_t$. For $t\in [T_n,T_{n+1})$, {\it i.e.} between two jumps, we then have
\begin{equation} \label{e:discr-lin}
\system{
&  X_t = X_{T_n}+\eps V_t\;(t-T_n)  &\\
&  Z_t = e^{-(t-T_n) \taueps^{-1}}  Z_{T_n}+ \eps \taueps \pare{\Id-e^{-(t-T_n)\taueps^{-1}}} (\nabla S_0) \, \Vj_n.\\
}
\end{equation}
Similarly to the computation in the proof of Lemma~\ref{lem:DTasymp}, we get, using
$
\Delta T_{n+1} := T_{n+1}  - T_{n},
$
\begin{multline}\label{e:jump-lin}
	\int_{T_n}^{T_{n+1}}\lambda(Z_t)\d t= 
	\lambda_0 \pare{\Delta T_{n+1}} - b^T \pare{Id - \e^{- {\Delta T_{n+1}}{\taueps^{-1}}} }{\taueps} Z_{T_n}  \\
- \eps b^T \pare{\Delta T_{n+1} \taueps -(\Id-\e^{-\Delta T_{n+1} \taueps^{-1} } )\taueps^2   }  (\nabla S_0) \Vj_n .
\end{multline}
The jump time $T_{n+1}$ can be calculated exactly from \eqref{e:jump-lin} by solving  
\begin{equation}
	\int_{T_n}^{T_{n+1}}\lambda(Z_t)\d t=\theta_{n+1},
\end{equation}
using a Newton iteration. Hence, no time discretization error is made.

When $S(x)$ is a nonlinear function of $x$, exact integration of $Y_t$ is no longer possible.  We therefore define a numerical solution  $(\disc{X}_t,\disc{V}_t,\disc{Y}_t)$ as follows.
Between jumps, we discretize the simulation in steps of size $\delta t$ and denote by $(\disc{X}_{n,k},\disc{Z}_{n,k})$ the solution at $t_{n,k}=\disc{T}_n+k\delta t$.  The numerical solution for $t\in[t_{n,k},t_{n,k+1}]$ is given by 
\begin{equation} \label{e:discr-nonlin}
  \begin{cases}
    \disc{X}_t = \disc{X}_{n,k}+\eps \Vj_{n}\;(t-t_{n,k})  \\
    \disc{Z}_t = \exp(-(t-t_{n,k}) \taueps^{-1}) \disc{Z}_{n,k}  + \eps \taueps \pare{\Id-\exp(-(t-t_{n,k})\taueps^{-1})} \nabla S(\disc{X}_{n,k}) \, \Vj_n.
  \end{cases}
\end{equation}
We denote by $K\geq 0$ the integer such that the simulated jump time $\disc{T}_{n+1}\in [t_{n,K},t_{n,K+1}]$. To find $\disc{T}_{n+1}$, we first approximate the integral $ \int_{\disc{T}_n}^{\disc{T}_{n+1}}\lambda(\disc{Z}_t)\d t$ using 
\begin{align}
	\int_{\disc{T}_n}^{\disc{T}_{n+1}}\lambda(\disc{Z}_t)\d t &=\sum_{k=0}^{K-1}\int_{t_{n,k}}^{t_{n,k+1}}\lambda(\disc{Z}_t)\d t 
	+ \int_{t_{n,K}}^{\disc{T}_{n+1}}\lambda(\disc{Z}_t)\d t ,
\end{align}
and then compute:
\begin{multline}
	\int_{t_{n,k}}^{t_{n,k+1}}\lambda(\disc{Z}_t)\d t= 
	\lambda_0 \delta t - b^T \pare{Id - \e^{- {\delta t}{\taueps^{-1}}} }{\taueps} \disc{Z}_{T_n}  \\
 - \eps b^T \pare{\delta t \taueps -(\Id-\e^{-\delta t \taueps^{-1} } )\taueps^2   }  \nabla S(\disc{X}_{n,k}) \Vj_n .
\end{multline}
The jump time $\disc{T}_{n+1}$ can then be computed as the solution of 
\begin{equation}\label{e:lin-Newton}
\int_{t_{n,K}}^{\disc{T}_{n+1}}\lambda(\disc{Z}_t)\d t=\theta_{n+1}- \sum_{k=0}^{K-1}\int_{t_{n,k}}^{t_{n,k+1}}\lambda(\disc{Z}_t)\d t,
\end{equation}
again using a Newton procedure. The use of \eqref{e:discr-nonlin} as a discretization scheme has a crucial consequence: the analysis performed in Section~\ref{sec:adv-diff} for the continuous time case still holds \emph{in an exact fashion} in the time discretized case. This is mainly due to the fact that the key estimate \eqref{eq:key_Duhamel} still holds. It is left to the reader to check that all the proofs performed in Section~\ref{sec:adv-diff} are not affected by this discretization. 
\begin{rem}\label{rem:nonlin_num}
  The discretization scheme \eqref{e:discr-nonlin} can be generalized to a non-linear evolution equation for the internal state. It is however necessary to check that a time discrete version of Assumption~\ref{ass:ODE} still holds in this case, and also that \eqref{eq:key_Duhamel} remains true.
\end{rem}
We also give the following consistency property, which is of independent interest~:
\begin{lem}\label{lem:jump-lin}
Assume $\disc{T}_n=T_n$, $X_{T_n}=\disc{X}_{T_n}$, $\Vj_n=\disc{V}_{T_n}$ and $\disc{Y}_{T_n}=Y_{T_n}$, then, using the time discretization \eqref{e:discr-nonlin}, we have 
\begin{align}
	\left|T_{n+1}-\disc{T}_{n+1}\right| \leq C \eps^2 \delta t \theta_{n+1}^2,
\end{align}
and consequently,
\begin{align}
	\left|X_{T_{n+1}}-\disc{X}_{\disc{T}_{n+1}}\right|\leq C \eps^3 \delta t \theta_{n+1}^2.\\
\end{align}
\end{lem}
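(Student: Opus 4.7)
The plan is to bound the discrepancy between the continuous and discretized internal states on the interval where the velocity is still $\Vj_n$ in both processes, transfer this to a bound on the mismatch of the cumulative turning rates $\int \lambda(Z_s)\,ds$ vs.\ $\int \lambda(\disc{Z}_s)\,ds$, and finally invert using the uniform lower bound $\lambda_{\min}$ to compare the jump times themselves. A key observation is that, as long as $t \in [T_n,\min(T_{n+1},\disc{T}_{n+1})]$, the velocity in both processes equals the constant $\Vj_n$, so the positions coincide pointwise, $X_t=\disc{X}_t$, and the only source of discrepancy in the internal state is the freezing of $\nabla S(\disc{X}_{n,k})$ on each subinterval $[t_{n,k},t_{n,k+1}]$, which gives $\abs{X_{t'} - \disc{X}_{n,k(t')}} \le \eps\,\delta t$.

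\textbf{Step 1: bound on $Z_t - \disc{Z}_t$.} The hypothesis $\disc{X}_{T_n}=X_{T_n}$, $\disc{Y}_{T_n}=Y_{T_n}$ implies $\disc{Z}_{T_n}=Z_{T_n}$. Both $Z_t$ and $\disc{Z}_t$ solve the same linear ODE $\dot w = -\taueps^{-1} w + \eps\, \nabla S(\cdot)\,\Vj_n$, the only difference being the argument of $\nabla S$. Duhamel's formula applied to the difference thus yields
\begin{equation*}
Z_t - \disc{Z}_t = \eps \int_{T_n}^{t} \e^{-(t-t')\taueps^{-1}}\bigl(\nabla S(X_{t'}) - \nabla S(\disc{X}_{n,k(t')})\bigr)\Vj_n\,dt'.
\end{equation*}
Using the Lipschitz estimate $\abs{\nabla S(X_{t'}) - \nabla S(\disc{X}_{n,k(t')})} \le c_S\,\eps\,\delta t$ together with the uniform bound $\norm{\e^{-s\taueps^{-1}}} \le C$ from Assumption~\ref{a:2}, we obtain $\abs{Z_t - \disc{Z}_t} \le C\,\eps^{2}\,\delta t\,(t - T_n)$ for every $t \in [T_n,\min(T_{n+1},\disc{T}_{n+1})]$.

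\textbf{Step 2: jump time and position.} Assume without loss of generality $T_{n+1} \le \disc{T}_{n+1}$ (the other case is symmetric). Subtracting the two defining identities $\int_{T_n}^{T_{n+1}} \lambda(Z_s)\,ds = \int_{T_n}^{\disc{T}_{n+1}} \lambda(\disc{Z}_s)\,ds = \theta_{n+1}$ gives
\begin{equation*}
\int_{T_{n+1}}^{\disc{T}_{n+1}} \lambda(\disc{Z}_s)\,ds \;=\; \int_{T_n}^{T_{n+1}} \bigl(\lambda(Z_s) - \lambda(\disc{Z}_s)\bigr)\,ds.
\end{equation*}
The left-hand side is bounded below by $\lambda_{\min}\,\abs{\disc{T}_{n+1} - T_{n+1}}$; the right-hand side, by linearity \eqref{eq:jump-lin} of $\lambda$ and Step~1, is controlled by $\abs{b}\int_{T_n}^{T_{n+1}} C\,\eps^{2}\,\delta t\,(s-T_n)\,ds \le C\,\eps^{2}\,\delta t\,\theta_{n+1}^{2}$, using also the crude bound $T_{n+1}-T_n \le \theta_{n+1}/\lambda_{\min}$. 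This proves the first claim. The position bound is then immediate: because the velocity is the constant $\Vj_n$ throughout $[T_n,\max(T_{n+1},\disc{T}_{n+1})]$, one has $X_{T_{n+1}} - \disc{X}_{\disc{T}_{n+1}} = \eps\,\Vj_n\,(T_{n+1} - \disc{T}_{n+1})$.

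The main technical point is the $\eps^{2}$ gain in Step~1. One $\eps$ comes from the explicit prefactor in front of $\nabla S\cdot\Vj_n$ in the internal-state ODE, and the second from the smallness $\abs{X_{t'} - \disc{X}_{n,k(t')}} = \bigo(\eps\,\delta t)$ of the spatial displacement within a discretization step. Missing either of these would leave the estimate one order of $\eps$ short of the claimed bounds. The fact that this gain survives integration is ensured by the uniform boundedness of $\e^{-s\taueps^{-1}}$, for which Assumption~\ref{a:2} is exactly what is needed.
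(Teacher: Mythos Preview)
Your proof is correct and follows essentially the same route as the paper: both use Duhamel's formula to compare $Z_t$ with $\disc{Z}_t$ via the $\bigo(\eps\,\delta t)$ freezing error in $\nabla S$, then subtract the two defining integrals $\int\lambda\,dt=\theta_{n+1}$ and invert with the lower bound $\lambda_{\min}$. Your position argument $X_{T_{n+1}}-\disc{X}_{\disc{T}_{n+1}}=\eps\,\Vj_n\,(T_{n+1}-\disc{T}_{n+1})$ is in fact slightly cleaner than the paper's concluding line.
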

\begin{proof}
Using the assumption $T_n=\disc{T}_n$ and $X_{T_n}=\disc{X}_{T_n}$, we get straightforwardly
\[
X_t = \disc{X}_t, \quad \forall t \in [T_n,T_{n+1} \wedge \disc{T}_{n+1}],
\]
so that
\[
\abs{\nabla S(X_{t}) -\nabla S\left(\disc{X}_{n,\left\lfloor(t_{n,0}-t')/\delta t \right\rfloor}\right)  } \leq C \delta t \eps .
\]
Consider Duhamel's formula \eqref{eq:Duhamel_lin_2}, and write a similar expression for the time discretized case for $t\in [T_n,\disc{T}_{n+1}]$~:
\begin{equation}\label{eq:Duhamel_num}
  \dps  \disc{Z}_t  = \e^{- (t-T_n){\taueps^{-1}}} \disc{Z}_{T_n} + \eps \int_{T_n}^{t} \e^{- \pare{t-t'}{\taueps^{-1}}} \nabla S\left(\disc{X}_{n,\left\lfloor(t_{n,0}-t')/\delta t \right\rfloor}\right)  \Vj_{n} \d t'.
\end{equation}
Using the assumption $\disc{Z}_{T_n} = Z_{T_n}$, the difference between the two Duhamel's integrals then yields
\begin{equation}\label{eq:dZ}
  \abs{Z_t - \disc{Z}_t} \leq C \eps^2 \delta t \theta_{n+1},
\end{equation}
for $t \in [T_n,T_{n+1} \wedge \disc{T}_{n+1}]$. 

We now proceed to find a bound on $\abs{ T_{n+1}-\disc{T}_{n+1}}$.  We start by writing
\begin{displaymath}
\int_{T_{n}}^{T_{n+1}}\lambda(Z_t)\d t = 	\int_{\disc{T}_{n}}^{\disc{T}_{n+1}}\lambda(\disc{Z}_t)\d t = \theta_{n+1};
 \end{displaymath}
and using the assumption $T_n=\disc{T}_n$, we obtain
\begin{align*}
 	-\int_{T_{n}}^{T_{n+1}\wedge \disc{T}_{n+1}}b^T&\pare{Z_t - \disc{Z}_t } \d t = 	\int_{T_{n+1}\wedge \disc{T}_{n+1}}^{\disc{T}_{n+1}}\lambda(\disc{Z}_t)\d t -\int_{T_{n+1}\wedge \disc{T}_{n+1}}^{{T}_{n+1}}\lambda({Z}_t)\d t  \\
&= \pare{\disc{T}_{n+1} - T_{n+1}\wedge \disc{T}_{n+1}} \lambda(\disc{Z}_{t^*_{1}})+ \pare{ T_{n+1} - T_{n+1}\wedge \disc{T}_{n+1}} \lambda(\disc{Z}_{t^*_{2}}) .
\end{align*}
for some values of $t^*_1\in [T_{n+1}\wedge \disc{T}_{n+1} , \disc{T}_{n+1}]$ and  $t^*_2\in [T_{n+1}\wedge \disc{T}_{n+1} , \disc{T}_{n}]$. Since $\lambda_{\rm min } \leq \lambda(z) \leq \lambda_{\rm max} $, we get
\begin{align*}
	\left| \disc{T}_{n+1}-T_{n+1} \right| & \leq C\int_{T_{n}}^{T_{n+1}\wedge \disc{T}_{n+1} }\abs{Z_t - \disc{Z}_t} \d t \\
& \leq C \theta_{n+1}^2 \eps^2  \delta t .
\end{align*}
Finally, note that 
\[
\abs{X_t-\disc{X}_t } \leq \eps (t-T_n)
\]
for $t \geq T_n$ to get the estimate on $\abs{X_{T_{n+1}}-\disc{X}_{\disc{T}_{n+1} } } $. This concludes the proof.
\end{proof}
 Note that the dependence in $\eps^3\delta t$ shows, at least at the consistency level, that the error due to discretization remains of a higher order in $\eps$ than the difference between the process with internal state and its advection-diffusion limit. 

\subsection{Nonlinear turning rate}

We now consider the nonlinear turning rate as defined in \eqref{eq:rate}. We again discretize in time to obtain the time-discrete solution \eqref{e:discr-nonlin}.  The jump time $\disc{T}_{n+1}$ is now computed by linearizing \eqref{eq:rate} in each time step, 
\begin{equation}\label{eq:nonlinrate_disc}
\disc{\lambda}\pare{\disc{Z}_t,\disc{Z}_{n,k}}=\lambda(\disc{Z}_{n,k})+ \frac{\d \lambda(\disc{Z}_{n,k})}{\d z}\pare{\disc{Z}_t-\disc{Z}_{n,k}}
\end{equation}
and approximating the integral
\begin{align}
	\int_{\disc{T}_n}^{\disc{T}_{n+1}}\disc{\lambda}(\disc{Z}_t,\disc{Z}_{n,\lfloor (t- \disc{T}_n)/\delta t \rfloor})\d t &=\sum_{k=0}^{K-1}\int_{t_{n,k}}^{t_{n,k+1}}\disc{\lambda}(\disc{Z}_t,\disc{Z}_{n,k})\d t 
	+ \int_{t_{n,K}}^{\disc{T}_{n+1}}\disc{\lambda}(\disc{Z}_t,\disc{Z}_{n,K})\d t  \nonumber \\
&= \theta_{n+1},
\end{align}
The jump time $\disc{T}_{n+1}$ can then again be computed using a Newton procedure as in \eqref{e:lin-Newton}.  Here again, 
the analysis performed in Section~\ref{sec:adv-diff} for the continuous time case, still holds \emph{in an exact fashion} in the time discretized case.
Also, a consistency property similar to Lemma~\ref{lem:jump-lin} holds in the case of non-linear turning rate. 
\begin{lem}\label{lem:jump-nonlin} Consider a non-linear turning rate \eqref{eq:rate} discretized by \eqref{eq:nonlinrate_disc} and \eqref{e:discr-nonlin}. 
Assume $\disc{T}_n=T_n$, $X_{T_n}=\disc{X}_{T_n}$, $\Vj_n=\disc{V}_{T_n}$ and $\disc{Y}_{T_n}=Y_{T_n}$, then 
\begin{align}
	\left|T_{n+1}-\disc{T}_{n+1}\right| \leq C \eps^2 (\delta t \theta_{n+1}^2 + \delta t ^2 \theta_{n+1}) ,
\end{align}
and consequently,
\begin{align}
	\left|X_{T_{n+1}}-\disc{X}_{\disc{T}_{n+1}}\right|\leq C \eps^3 (\delta t \theta_{n+1}^2 + \delta t ^2 \theta_{n+1}) .
\end{align}
\end{lem}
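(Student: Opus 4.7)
The strategy is to mirror the proof of Lemma~\ref{lem:jump-lin}, adding one new ingredient that controls the linearization error introduced by the approximation \eqref{eq:nonlinrate_disc}. Since the discretization \eqref{e:discr-nonlin} of the ODE for $\disc{Z}_t$ is the same as in the linear-rate case, the matching hypotheses at time $T_n$ force $X_t = \disc{X}_t$ for $t \in [T_n, T_{n+1} \wedge \disc{T}_{n+1}]$ (both trajectories are ballistic with velocity $\eps \Vj_n$), and the Duhamel comparison from the proof of Lemma~\ref{lem:jump-lin} applies verbatim to yield $|Z_t - \disc{Z}_t| \leq C\eps^2 \delta t\, \theta_{n+1}$ on this interval.

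I would then use the defining identity $\theta_{n+1} = \int_{T_n}^{T_{n+1}} \lambda(Z_t)\,dt = \int_{T_n}^{\disc{T}_{n+1}} \disc{\lambda}(\disc{Z}_t, \disc{Z}_{n,\lfloor\cdot\rfloor})\,dt$ and subtract to obtain an equation of the form $(\disc{T}_{n+1} - T_{n+1})\lambda^\star = \int_{T_n}^{T_{n+1} \wedge \disc{T}_{n+1}} [\lambda(Z_t) - \disc{\lambda}(\disc{Z}_t, \disc{Z}_{n,k})]\,dt$, with $\lambda^\star$ pinched between $\lambda_{\min}$ and $\lambda_{\max}$ by the mean-value theorem (exactly as in Lemma~\ref{lem:jump-lin}). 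The integrand splits as $[\lambda(Z_t) - \lambda(\disc{Z}_t)] + [\lambda(\disc{Z}_t) - \disc{\lambda}(\disc{Z}_t, \disc{Z}_{n,k})]$: the first bracket is Lipschitz-controlled by $C|Z_t - \disc{Z}_t|$ and contributes the familiar $\bigo(\eps^2 \delta t\,\theta_{n+1}^2)$ piece after integration, while the second bracket is the order-two Taylor remainder of $\lambda$ at $\disc{Z}_{n,k}$ and is bounded pointwise by $C|\disc{Z}_t - \disc{Z}_{n,k}|^2$ via the smoothness of $\lambda$ in \eqref{eq:lin_rate}.

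The main new task, which I expect to be the principal difficulty, is to bound the intra-subinterval oscillation $|\disc{Z}_t - \disc{Z}_{n,k}|$ with the correct $\eps$-scaling. The naive bound $|\disc{Z}_t - \disc{Z}_{n,k}| \leq C\delta t$ that follows from $|d\disc{Z}/dt| = \bigo(1)$ would only yield an $\bigo(\delta t^2 \theta_{n+1})$ contribution, missing the $\eps^2$ factor. To extract the right scaling I would use the explicit formula $\disc{Z}_t - \disc{Z}_{n,k} = (\Id - \e^{-(t-t_{n,k})\taueps^{-1}})\bigl(\eps\taueps\nabla S(\disc{X}_{n,k})\Vj_n - \disc{Z}_{n,k}\bigr)$ from \eqref{e:discr-nonlin} and exploit the structural cancellation inside the parenthesis coming from the near-equilibrium regime, together with the $\bigo(1)$ bound on $\taueps^{-1} Z$ supplied by Lemma~\ref{lem:techbound} and the smallness $|\disc{Z}_{n,k}| = \bigo(\eps^\delta)$ controlling $\lambda''$ through \eqref{eq:lin_rate}. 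Squaring and summing over the $\bigo(\theta_{n+1}/\delta t)$ subintervals of $[T_n, T_{n+1}\wedge\disc{T}_{n+1}]$ then produces the $\bigo(\eps^2 \delta t^2 \theta_{n+1})$ contribution.

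Combining the two contributions and dividing by $\lambda_{\min}$ gives the stated bound on $|T_{n+1} - \disc{T}_{n+1}|$. The position estimate is then immediate from $|X_{T_{n+1}} - \disc{X}_{\disc{T}_{n+1}}| \leq \eps\,|T_{n+1} - \disc{T}_{n+1}|$, since both paths are ballistic with common velocity $\Vj_n$ up to their respective jump times, and in particular coincide up to time $T_{n+1}\wedge\disc{T}_{n+1}$.
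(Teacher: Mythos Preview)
Your proposal is correct and follows essentially the same route as the paper's proof: the same integral inequality derived from the two defining identities for $\theta_{n+1}$, the same splitting of the integrand into $[\lambda(Z_t)-\lambda(\disc{Z}_t)]+[\lambda(\disc{Z}_t)-\disc{\lambda}(\disc{Z}_t,\disc{Z}_{n,k})]$, the same Taylor-remainder bound $C|\disc{Z}_t-\disc{Z}_{n,k}|^2$ for the second bracket, and the same explicit factorisation $\disc{Z}_t-\disc{Z}_{n,k}=m'(t-t_{n,k})(\eps\nabla S(\disc{X}_{n,k})\Vj_n-\taueps^{-1}\disc{Z}_{n,k})$ to extract the $\eps$-scaling. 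Your identification of the intra-subinterval oscillation bound as the key difficulty, and your plan to resolve it via the near-equilibrium structure, mirror the paper's Step~(i) exactly.
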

\begin{proof}
Similarly to the linear case we can show that
        \begin{equation}
          \label{eq:diff1}
          \abs{T_{n+1} - \disc{T}_{n+1}} \leq C \int_{T_n}^{T_{n+1}\wedge \disc{T}_{n+1}} \abs{\lambda(Z_t)-\disc{\lambda}\left(\disc{Z}_t,\disc{Z}_{n,\lfloor\frac{t_{n,0}-t}{\delta t} \rfloor}\right)  } \d t .
        \end{equation}	
We then estimate the righthand side of~\eqref{eq:diff1}.  This is done in two steps~:
\begin{enumerate}[Step (i)]
	\item Estimate  difference between  turning rate and  locally linearized turning rate for the discretized deviations $\left|\lambda(\disc{Z}_t)-	
			\disc{\lambda}(\disc{Z}_t,\disc{Z}_{n,k})\right|$.
	\item Estimate  difference between  turning rates for the discretized and time-continuous deviations $\abs{ \lambda({Z}_t)-\lambda(\disc{Z}_t) }$.
\end{enumerate}

Step~(i): $\left|\lambda(\disc{Z}_t)-	
		\disc{\lambda}(\disc{Z}_t,\disc{Z}_{n,k})\right|$. \\
A Taylor expansion for $t\in[t_{n,k},t_{n,k+1}]$ yields  
		\[
		\left|\lambda(\disc{Z}_t)-	
		\disc{\lambda}(\disc{Z}_t,\disc{Z}_{n,k})\right|\le C \pare{\disc{Z}_t-\disc{Z}_{n,k}}^2.
		\]
Then, remarking that,	 for $t\in[t_{n,k},t_{n,k+1}]$, we have \[
		\disc{Z}_t=\e^{-\pare{t-t_{n,k}}{\taueps}^{-1}}\disc{Z}_{n,k}+\eps\taueps\pare{\Id-\e^{\pare{-\pare{t-t_{n,k}}{\taueps}^{-1}}}} \nabla {S}( \disc{X}_{n,k} ),
		\]

	we get
	        \begin{align*}
          	\abs{\disc{Z}_t-\disc{Z}_{n,k}}&=\abs{\pare{\Id-\exp\pare{-\frac{t-t_{n,k}}{\taueps}}}\pare{\disc{Z}_{n,k}- \taueps\eps\nabla {S}( \disc{X}_{n,k} )} }, \\
                &=\abs{ m'(t-t_{n,k}) ( eps\nabla {S}( \disc{X}_{n,k} ) -\taueps^{-1} \disc{Z}_{n,k}  ) } \\
&\leq C \delta t \eps .
        \end{align*}
		In the last line of the above, we have used the estimate
	\[
	\abs{\disc{Z}_t} = \bigo(\eps^{\delta})
	\]
 as in the non discretized case,	which follows from Duhamel's formula \eqref{eq:Duhamel_num} and the assumption $\abs{\disc{Z}_{T_n}}  = \bigo (\eps^{\delta})$.
Finally, we obtain
	        \begin{align*}
	          	\int_{t_{n,k}}^{t_{n,k+1}} \left|\lambda(\disc{Z}_t)-	
		\disc{\lambda}(\disc{Z}_t,\disc{Z}_{n,k})\right| \d t \leq C \delta t^3 \eps^2,
	        \end{align*}
and thus:
\begin{equation*}
  \int_{T_n}^{T_{n+1}\wedge \disc{T}_{n+1}} \abs{\lambda(Z_t)-\disc{\lambda}\left(\disc{Z}_t,\disc{Z}_{n,\lfloor\frac{t_{n,0}-t}{\delta t} \rfloor}\right)  } \d t
\leq C \delta t^2 \eps^2 \theta_{n+1}
\end{equation*}
Step~(ii): $\abs{ \lambda({Z}_t)-\lambda(\disc{Z}_t) }$. \\
	Using Duhamel's formula as in the proof of Lemma~\ref{lem:jump-lin}, and using~\eqref{eq:dZ} above yields
	\begin{align*}
	  \abs{ \lambda({Z}_t)-\lambda(\disc{Z}_t) } \leq C \abs{ {Z}_t-\disc{Z}_t } \leq  C \delta t \eps^2 \theta_{n+1}.
	\end{align*}
	Finally we get, using~\eqref{eq:diff1},
	\[
	\abs{T_{n+1} - \disc{T}_{n+1}} \leq  C \eps^2(\delta t \theta_{n+1}^2 + \delta t ^2 \theta_{n+1}).
	\]
	This concludes the proof.
	
\end{proof}

\section{Conclusions and discussion\label{sec:concl}}

This paper proved rigorously, using probabilistic arguments, the convergence of a velocity-jump process for chemotaxis of bacteria with internal state to an advection-diffusion limit, and showed that the same advection-diffusion limit can be obtained as the diffusion limit of a  simpler, coarse process with direct gradient sensing.  From a numerical point of view, we introduced a time-discretized model that retains these diffusion limits.  

A direct simulation using stochastic particles presents a large statistical variance, even in the diffusive asymptotic regime, where the limiting behavior of the bacterial density is known explicitly. Consequently, it is difficult to assess accurately how the solutions of the fine-scale model differ from their advection-diffusion limit in intermediate regimes. Therefore, numerical results are deferred to the companion paper \cite{variance}, where we will use the results of this paper to construct a hybrid scheme for the simulation of the fine-scale process with ``asymptotic'' variance reduction.  The hybrid scheme simulates both the process with internal state and the simpler process with direct gradient sensing using the same random numbers, and exploits a pathwise coupling between the two trajectories. The variance reduction that is ``asymptotic'' in the sense that the variance vanishes in the diffusion limit.

\section*{Acknowledgements}

The authors thank Radek Erban, Thierry Goudon, Yannis Kevrekidis and Tony Leli\`evre for interesting discussions that eventually led to this work.  
This work was performed during a research stay of GS at SIMPAF (INRIA - Lille). GS warmly thanks the whole SIMPAF team for its hospitality. GS is a Postdoctoral Fellow of the 
Research Foundation -- Flanders. 
This work was partially supported by the Research Foundation -- Flanders
through Research Project G.0130.03 and by the Interuniversity
Attraction Poles Programme of the Belgian Science Policy Office through grant
IUAP/V/22 (GS). The scientific responsibility rests with its authors.

\bibliographystyle{plain}
\bibliography{bib-papers}

\end{document}